\DeclareMathSymbol{\lsb@l}{\mathalpha}{letters}{`l}
\renewcommand{\mid}{:}
\newcommand\need[1]{\par \penalty-100 \begingroup %
   \dimen@\pagegoal \advance\dimen@-\pagetotal %
   \ifdim #1>\dimen@ %
      \ifdim\dimen@>\z@ \vskip -\pagedepth plus 1fil \fi
      \break
   \fi \endgroup}
\theoremstyle{plain}
\newtheorem{theorem}{Theorem}
\newtheorem{corollary}{Corollary}
\newtheorem{remark}{Remark}
\newtheorem{lemma}{Lemma}
\newtheorem{cl}{Claim}
\newtheorem{definition}{Definition}
\newtheoremstyle{proof}%
{\item[\hskip\labelsep \theorem@headerfont ##1\theorem@separator]}%
{\item[\hskip\labelsep \theorem@headerfont ##1\ ##3\theorem@separator]}
\theoremstyle{proof}
\newtheorem{proof}{Proof}
\newtheorem{smallproof}{Proof}
\newcommand{\hourclass}{\mathbin{\scalerel*{\@hgpic}{\ensuremath{\Sigma}}}}%
\newcommand{\@hgpic}{%
    \setlength{\unitlength}{0.34cm}%
    \begin{picture}(1,1.5)%
    \thicklines%
    \put(0,0){\line(2,3){1}}%
    \put(1,1.5){\line(-1,0){1}}%
    \put(0,1.5){\line(2,-3){1}}%
    \put(1,0){\line(-1,0){1}}%
    \end{picture}%
}
\begin{document}

\title{\textbf{Kempe Chains and Rooted Minors}}
\author{Matthias Kriesell \and Samuel Mohr\thanks{Gefördert durch die Deutsche Forschungsgemeinschaft (DFG) -- 327533333.}}

\maketitle

\begin{abstract}
  \setlength{\parindent}{0em}
  \setlength{\parskip}{1.5ex}
\noindent
  A (minimal) {\em transversal} of a partition is a set which contains exactly one element from each member of the partition and nothing else. A \emph{coloring} of a graph is a partition of its vertex set into anticliques, that is, sets of pairwise nonadjacent vertices. 
  We study the following problem: Given a transversal $T$ of a proper coloring $\mathfrak{C}$ of some graph $G$,
  is there a partition $\mathfrak{H}$ of a subset of $V(G)$ into connected sets such that $T$ is a transversal of $\mathfrak{H}$
and such that two sets of $\mathfrak{H}$ are adjacent  if their corresponding vertices from $T$ are connected by a path in $G$ using only two colors?

  It has been suggested by the first author to study the following question: for any transversal $T$ of a coloring $\mathfrak{C}$ of order $k$ of some graph $G$ such that any
  pair of color classes induces a connected graph, does there exist such a partition $\mathfrak{H}$ with pairwise adjacent sets (which would prove Hadwiger's Conjecture for the class of uniquely optimally colorable graphs)?
  This is open for small $k \geq 5$, here we give a proof for the case that $k=5$ and the subgraph induced by $T$ is connected. 
Moreover, we show that for $k\geq  7$, it is not sufficient for the existence of  $\mathfrak{H}$ as above just to force any two transversal vertices to be connected by a 2-colored path. 

\medskip

\textbf{AMS classification:} 05c40, 05c15.

\textbf{Keywords:} Kempe chain, rooted minor, Hadwiger's Conjecture. 
\end{abstract}

\section{Introduction}

Hadwiger's Conjecture states that the order of a largest clique minor of every graph $G$ is at least the minimum number of colors needed to color $G$ properly~\cite{Hadwiger1943}.
Introduced as a possible approach to solve the Four-Color-Theorem in 1943, it turned out to be one of the most challenging conjectures in structural graph theory. 
A breakthrough result on the general conjecture is due to Robertson, Seymour, and Thomas solving another step Hadwiger's Conjecture~\cite{robertson1993hadwiger}: a graph with chromatic number $k\leq 6$ contains a clique minor of order $k$. 
In the 1980s, Kostochka~\cite{kostochka1984lower} and Thomason~\cite{thomason1984extremal} independently proved the following: every $K_t$-minor-free graph  has average degree $O(t \sqrt{\log t})$ and hence is $O(t \sqrt{\log t})$-colorable.
Very recently, Delcourt and Postle improved the order of magnitude and showed that a $K_t$-minor-free graph is even $O(t\log\log t)$-colorable~\cite{delcourt2021reducing}.

Another natural approach to tackle Hadwiger's Conjecture is to bound the  size of color classes, which however has also not been crowned with large success (cf.~\cite{seymour2016hadwiger}). 
The first non-trivial step is to forbid antitriangles, which bounds the sizes of color classes by $2$. 
Seymour conjectured that Hadwiger's Conjecture holds in this particular case (cf.~\cite{blasiak2007special}) but likewise Hadwiger's Conjecture, it is also still unknown. 
Moreover, Seymour suggested to look for counterexamples to Hadwiger's Conjecture within this class of graphs~\cite{seymour2016hadwiger}. 
The fact that the longstanding conjecture of Hadwiger has barely generated results is an indicator that it is tenacious.

In this paper, we study Hadwiger's Conjecture under the assumption that more details about the coloring/colorings are known. Intuitively, the hope would be that it leads to a more rigid structure, which will turn out to be benefitial for constructing minors. 
Before we motivate this approach further and provide details, it is necessary to clarify the notation used throughout this paper. 

\subsection*{Notation}

All graphs in the paper are finite, undirected, and simple. 
For terminology not defined here we refer to contemporary text books such as~\cite{BondyMurty2008} or~\cite{Diestel2017}.
By $K_S$ the complete graph on a finite set $S$ is denoted. Note that we emphasize the vertex set $S$ here and don't speak about $K_{|S|}$. 
A (minimal) {\em transversal} of a set $\mathfrak{C}$ of disjoint sets is a set $T$ containing exactly one member of every $A \in \mathfrak{C}$
and nothing else; we also say that $\mathfrak{C}$ is {\em traversed} by $T$.
A {\em coloring} of a graph $G$ is a partition $\mathfrak{C}$ of its vertex set $V(G)$ into {\em anticliques},
that is, sets of pairwise nonadjacent vertices. 
The {\em chromatic number} $\chi(G)$ is the smallest order of a coloring of $G$.
A {\em Kempe chain} is a component of $G[A \cup B]$
for some $A \not= B$ from a coloring $\mathfrak{C}$. 
A graph $H$ is a {\em minor} of a graph $G$ if there exists a family $c=(V_t)_{t \in V(H)}$ of pairwise disjoint subsets of $V(G)$,
called {\em bags}, such that $V_t$ is nonempty and $G[V_t]$ is connected for all $t \in V(H)$ and there is an edge connecting $V_s$ and $V_t$ for all $st \in E(H)$.
Any such $c$ is called an {\em $H$\!-certificate} in $G$,
and a {\em rooted $H$\!-certificate} if, moreover, $V(H) \subseteq V(G)$ and $t \in V_t$ for all $t \in V(H)$.
If there exists a rooted $H$\!-certificate, then $H$ is a {\em rooted minor} of $G$.

\subsection*{Uniquely optimally colorable graphs}

As a new approach, the first author suggested to study graphs with a bounded number of optimal colorings and, in particular, investigate uniquely optimally colorable graphs~\cite{Kriesell2017}. 
Let $x_1,\dots, x_k$ be differently colored vertices of a uniquely $k$-colorable graph, then it is straightforward to see that there exists a system of edge disjoint $x_i$,$x_j$-paths ($i\neq j$ from $\{1,\dots, k\}$) by taking only edges between the corresponding two color classes. This is a so-called clique immersion of order $k$ at $x_1,\dots, x_k$ and the question suggests itself whether there exists a clique minor of order $k$ rooted at $x_1,\dots, x_k$. 
This question has been answered affirmatively if $k\leq 10$~\cite{kriesell2021note}, if the graph is antitriangle-free~\cite{Kriesell2017},  and the question is moreover true for line graphs~\cite{kriesell2019rooted}.

The idea of taking a vertex set of a graph containing vertices of all colors is not new. 
Holroyd stated the following conjecture~\cite{holroyd1997strengthening}: if a graph $G$ is optimally $k$-colorable and a set $S \subseteq V(G)$ takes all $k$ colors in every $k$-coloring of $G$, then there exists a $K_k$-certificate such that every bag intersects $S$. Note that $|S|$ can be larger than $k$ in this conjecture. 
Holroyd called it the Strong Hadwiger's Conjecture; taking $S=V(G)$ implies Hadwiger's Conjecture. 
In the setting of uniquely optimally colorable graphs the Strong Hadwiger's Conjecture is equivalent to the above mentioned question.

\subsection*{Relaxation to two-colored path systems}

In many cases, the object under consideration are clique minors, i.\,e., a $K_k$-certificate for some $k$ in the graph. 
In the present paper, we study the following question, which does not necessarily ask for a clique minor:
let $G$ be a graph and assume a coloring of $G$ with $k$ colors ($k\in\mathbb{N}$) is given. Moreover, let $x_1,\dots, x_k$ be vertices of $G$ all colored differently. 
Then there is a  system of edge disjoint $x_i$,$x_j$-paths for some but not necessarily all pairs $(x_i,x_j)$ by only taking edges between the two corresponding color classes. 
We may then ask whether $G$ contains a graph $H$ on vertices $x_1,\dots, x_k$ as a rooted minor, where the edge set of $H$ is defined in the obvious way (the edge $x_ix_j$ is an edge in $H$ if an $x_i$,$x_j$-path in the path system in $G$ exists).

To be more precice, we define the graph that we would like to obtain as a minor. This graph should capture the property of the presence of two-colored paths between the transversal vertices of a coloring. 
For a transversal $T$ of a coloring $\mathfrak{C}$ of a graph $G$ we define the {\em routing graph $H(G,\mathfrak{C},T)$}
to be the graph on the vertex set $T$ where any two $s \not= t$ from $T$ are adjacent if and only if they belong to the same Kempe chain in $G$.

\subsection*{Property (*)}

We study the following problem: what are the graphs $H$ such that if $H$ is a routing graph of some graph $G$ with coloring $\mathfrak{C}$ and a transversal $T$, then $G$ has a rooted $H$\!-certificate. 
Those graphs are said to have a special property, which we call property (*).

\begin{definition}
Let us say that a graph $K$ {\em has property (*)} if for every transversal $T$ of every coloring $\mathfrak{C}$ with $|\mathfrak{C}|=|V(K)|$ of every graph $G$ such that $K$ is
isomorphic to a spanning subgraph $H$ of $H(G,\mathfrak{C},T)$,  there exists a rooted $H$\!-certificate in $G$ (rooted in $T$). 
\end{definition}

It is obvious that property (*) holds for $K_1$ and transfers to isomorphic copies of $K$. We will show later, that 
property (*) inherits to subgraphs of $K$ and that $K$ has property (*) if and only if every component of $K$ has.

\subsection*{Kempe colorings}

We start by investigating the setting in the case that the routing graph is a complete graph. 
Uniquely colorable graphs imply immediately the presence of all two-colored paths; however, there is a less restrictive class of graphs capturing this property. These graphs are those having a Kempe coloring. 

A coloring $\mathfrak{C}$ is a {\em Kempe coloring} if any two vertices from distinct color classes belong to the same Kempe chain or,
in other words, the union of any two color classes is connected.
The first author asked in~\cite{Kriesell2017} whether given
  a Kempe coloring $\mathfrak{C}$ of some graph $G$ and a transversal $T$ of $\mathfrak{C}$,
  there exists a set of connected, pairwise disjoint, pairwise adjacent subsets of $V(G)$ traversed by $T$.

This would prove Hadwiger's Conjecture --- that every graph with chromatic number $k$ has a clique minor of order $k$~\cite{Hadwiger1943} ---
for graphs with a Kempe coloring, in particular for uniquely $k$-colorable graphs.
In the terminology defined above, the conjecture reads as follows:
If $\mathfrak{C}$ is a Kempe coloring of $G$ and $T$ is a transversal of $\mathfrak{C}$, then $H:=H(G,\mathfrak{C},T)$ is the complete graph on $T$, i.\,e.\ $K_T=H$,
and there exists a rooted $H$\!-certificate in $G$. This would follow if every complete graph --- and hence {\em every} graph --- had property (*).

However, property (*) is known to be too restrictive to be true:
B{\"o}hme, Kostochka, and Thomason constructed graphs on $2k$ vertices with a $k$-Kempe coloring such that the largest clique minor is of order $\frac23k+o(k)$ \cite{bohme2010hadwiger} (see also~\cite{bonamy2021recolouring}).
This immediately implies that a Kempe coloring (which is not an optimal coloring) is not enough to guarantee a large clique minor; and hence $K_k$ must fail to have property (*) for large $k$. 
For small $k$ and for completeness, 
we will see that already $K_7$ does not have property (*).

\subsection*{Graphs with property (*)}

We emphasized that it makes a difference whether the subgraph induced by two color classes is connected or just both transversal vertices belong to the same Kempe chain. 
Fabila-Monroy and Wood studied graphs containing a rooted $K_4$-minor~\cite{FabilaMonroyWood2013}. They characterized those graphs by the presence of a system of edge disjoint paths instead of considering color classes and Kempe chains. 
It is possible to derive from their result that graphs with at most four vertices do have property (*) (cf.~Theorem~\ref{T3}).
Therefore, the question for the largest
$b$ such that all graphs of order at most $b$ have property (*) suggests itself (it must be one of $4,5,6$ by Theorem~\ref{T1}).

Another natural question to ask is whether the number of graphs having property (*) is bounded. 
This is not the case,
as graphs with at most one cycle have property (*) by Theorem~\ref{T4}. 
As a consequence, for example, we get
that if $x_0,\dots,x_{\ell-1}$ belong to different color classes of some coloring $\mathfrak{C}$ of
a graph $G$ and $x_i,x_{i+1}$ belong to the same Kempe chain for $i \in \{0,\dots,\ell-1\}$ (indices modulo $\ell$), then
there exists a cycle $C$ in $G$ and disjoint $x_i$,$y_i$-paths $P_i$ with $V(C) \cap V(P_i)=\{y_i\}$ and $y_0,\dots,y_{\ell-1}$ occur
in this order on $C$.

Apart from this, we determine a number of further $5$-vertex graphs having property (*) and infer that
if $T$ is a {\em connected} transversal of a Kempe coloring of order $5$ of some graph $G$, then
there exists a rooted $H(G,\mathfrak{C},T)$-certificate (where $H(G,\mathfrak{C},T)$ is isomorphic to $K_5$).

\section{Subgraphs of graphs with property (*)}

In this section, we show that property (*) inherits to subgraphs and present the tool of extending certificates from graphs obtained by some contractions in large details. This tool will be seen a couple of times throughout the following proofs in this paper. 

\begin{theorem}\label{Tsubgraphs}
Property (*) inherits to subgraphs of $K$; furthermore, $K$ has property (*) if and only if every component of $K$ has.
\end{theorem}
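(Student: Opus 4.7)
My plan is to establish subgraph inheritance first, then derive the component statement from it. Subgraph inheritance reduces to the two atomic cases of deleting one edge and deleting one vertex, both of which can be iterated to reach any subgraph of $K$.

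For edge deletion, say $K' = K - e$ with $e = uv$, and consider an instance $(G, \mathfrak{C}, T, H')$ with $H' \cong K'$ spanning in $H(G, \mathfrak{C}, T)$ via isomorphism $\varphi$. I would enlarge $G$ to $G^*$ by inserting the edge $\varphi(u)\varphi(v)$, which is legitimate because its endpoints lie in distinct color classes of $\mathfrak{C}$. Adding an edge can only merge Kempe chains, so $H(G, \mathfrak{C}, T) \subseteq H(G^*, \mathfrak{C}, T)$ and the new edge appears in $H(G^*, \mathfrak{C}, T)$; hence $H := H' + \varphi(u)\varphi(v) \cong K$ is spanning, and property (*) of $K$ yields a rooted $H$-certificate in $G^*$. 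Rootedness puts $\varphi(u)$ and $\varphi(v)$ into distinct bags, so the added edge is not internal to any bag and every bag remains connected in $G$; moreover an $H'$-edge witnessed in $G^*$ by $\varphi(u)\varphi(v)$ would force that edge to equal $\varphi(e)$, which is absent from $H'$, so every $H'$-edge is already witnessed in $G$.

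For vertex deletion, $K' = K - v$ with instance $(G', \mathfrak{C}', T', H')$ and $\varphi : K' \to H'$, I would attach a fresh vertex $v^*$ to $G'$ adjacent exactly to $\varphi(N_K(v))$, and set $\mathfrak{C} := \mathfrak{C}' \cup \{\{v^*\}\}$, $T := T' \cup \{v^*\}$. Since each $C_t$ ($t \in T'$) is an anticlique and $v^*$ occupies its own singleton color, the Kempe chain of $v^*$ reaches $t \in T'$ exactly when $v^*t$ is an edge, while Kempe chains between two classes of $\mathfrak{C}'$ are unchanged (as $v^*$ belongs to neither); thus $H := H' \cup \{v^*\varphi(x) \mid x \in N_K(v)\} \cong K$ is spanning in $H(G, \mathfrak{C}, T)$. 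Property (*) of $K$ produces a rooted $H$-certificate $(V_t)_{t \in T}$ in $G$, and since $v^* \in V_{v^*}$ together with disjointness forces $V_t \subseteq V(G')$ for every $t \in T'$, restriction to $T'$ is the desired rooted $H'$-certificate in $G'$.

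The component claim then follows. The forward direction is immediate from the inheritance just proved. For the converse, given a $K$-instance $(G, \mathfrak{C}, T, H)$ whose components $H_1, \ldots, H_m$ correspond to components $K_1, \ldots, K_m$ of $K$ on vertex sets $T_i := V(H_i)$, I would let $\mathfrak{C}_i \subseteq \mathfrak{C}$ consist of the color classes meeting $T_i$ and set $G_i := G[\bigcup \mathfrak{C}_i]$. Since $G_i$ is induced, Kempe chains between any two classes of $\mathfrak{C}_i$ coincide in $G_i$ and in $G$, so each $H_i$ embeds spanningly in $H(G_i, \mathfrak{C}_i, T_i)$, and property (*) of $K_i$ yields rooted $H_i$-certificates. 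The families $\mathfrak{C}_1, \ldots, \mathfrak{C}_m$ are pairwise disjoint because the $T_i$ are, so the bags across different components are disjoint subsets of $V(G)$; as $H$ has no cross-component edges, the union of these certificates is the required rooted $H$-certificate in $G$. The main subtlety I anticipate is the bookkeeping to ensure that the added or restricted features in each reduction do not perturb the Kempe-chain graph on the remaining transversal vertices; this is controlled in every step above by the anticlique property of color classes together with the disjointness of bags in rooted certificates.
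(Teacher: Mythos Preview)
Your proof is correct and close in spirit to the paper's, but the two diverge in how the non-spanning case is handled. The paper first proves inheritance to \emph{spanning} subgraphs (your edge-deletion step, done for all missing edges at once), then separately proves that every \emph{component} of a graph with (*) has (*) by adjoining a disjoint complete graph on the missing vertices; general subgraph inheritance is then obtained indirectly by padding $K$ with isolated vertices to a spanning subgraph of $K'$ and extracting $K$ as a component. You instead handle vertex deletion directly, attaching a single fresh vertex $v^*$ (in its own singleton colour class) with neighbourhood $\varphi(N_K(v))$. This is a cleaner and more localised construction than the paper's detour through components, and your verification that the Kempe-chain graph on $T'$ is unchanged while $v^*$ picks up exactly the right edges is correct. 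Both routes ultimately rely on the same mechanism: enlarge the instance so that the bigger graph's property (*) applies, then use rootedness and bag disjointness to strip the added material from the resulting certificate. Your converse for components coincides with the paper's.
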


\begin{proof}
First, assume $K'$ has property (*) and $K$ is a spanning subgraph of $K'$. Take an arbitrary graph $G$ with a coloring $\mathfrak{C}$ with $|\mathfrak{C}|=|V(K)|$ and  a transversal $T$ of $\mathfrak{C}$ such that $K$ is
isomorphic to a spanning subgraph $H$ of $H(G,\mathfrak{C},T)$. For $e\in E(K')\setminus E(K)$ add a suitable edge between two transversal vertices to $G$ (if not already present) to obtain a graph $G'$. Then $K'$ is isomorphic to a spanning subgraph $H'$ of $H(G',\mathfrak{C},T)$.
Since $K'$ has property (*), there is a rooted $H'$\!-certificate $c$ in $G'$ and $c$ is also a rooted $H$\!-certificate in $G$. 

Next, assume that $K'$ has property (*) and let $K\neq K'$ be a component of $K'$. Take an arbitrary graph $G$ with a coloring $\mathfrak{C}$ with $|\mathfrak{C}|=|V(K)|$ and  a transversal $T$ of $\mathfrak{C}$ such that $K$ is
isomorphic to a spanning subgraph $H$ of $H(G,\mathfrak{C},T)$. A graph $G'$ can be obtained from $G$ by the disjoint union with a complete graph $K_S$ on vertex set $S:=V(K)-V(K')$. Let $\mathfrak{C}':=\mathfrak{C}\cup\{\{s\}\mid s \in S\}$ and $T':=T\cup S$. 
Then $K'$ is isomorphic to a spanning subgraph $H'$ of $H(G',\mathfrak{C}',T')$.
Since $K'$ has property (*), there is a rooted $H'$\!-certificate $c'=(V_t)_{t \in V(K')}$ in $G'$. Then $c:=(V_t)_{t \in V(K)}$ is a rooted $H$\!-certificate in $G$. 
If, conversely, every component of $K$ has property (*) and there is an arbitrary graph $G$ with a coloring $\mathfrak{C}$ with $|\mathfrak{C}|=|V(K)|$ and  a transversal $T$ of $\mathfrak{C}$ such that $K$ is
isomorphic to a spanning subgraph $H$ of $H(G,\mathfrak{C},T)$, then for each component $H_1,H_2,\dots$ of $H$ there are pairwise disjoint subgraphs $G_1,G_2,\dots$ of $G$ and $\mathfrak{C}_1,\mathfrak{C}_2,\dots$ and $T_1,T_2,\dots$ such that $H_i$ is a subgraph of $H(G_i,\mathfrak{C}_i,T_i)$ for $i\in\{1,2,\dots\}$. Since property (*) holds for $H_i$, $i\in\{1,2,\dots\}$, there is a rooted $H_i$-certificate $c_i$ in $G_i$. 
Then the union of $c_1,c_2,\dots$ (considered as subsets of $V(H_i)\times\mathfrak{P}(V(G))$) is a rooted $H$\!-certificate in $G$, so that $K$ has property (*).

Finally, assume $K'$ has property (*) and let $K$ be a subgraph of $K'$ with $|V(K)|<|V(K')|$. Let $L$ be the edgeless graph on $V(K')\setminus V(K)$. Then $K\cup L$ is a spanning subgraph of $K'$ and has property (*), and so has $K$ as one of its components. 
\end{proof}

Next, we show that a graph with property (*) keeps its property if we attach a pending edge. 

\begin{lemma}\label{Lpending}
Let $K$ be a graph and $q\in V(K)$ be a vertex of degree~$1$. If $K-q$ has property (*), then $K$ has property (*). 
\end{lemma}

\begin{proof}
Let $K$ and $q$ such that $K-q$ has property (*). Assume that $K$ does not have property (*).
Let $r$ be the neighbor of $q$ in $K$.
Then, there exists a graph $G$ with a coloring $\mathfrak{C}$ and a transversal $T$ of $\mathfrak{C}$
such that $K$ is isomorphic to a spanning subgraph $H$ of $H(G,\mathfrak{C},T)$ but $G$ has no rooted $H$\!-certificate.
We may take $G$ with $|V(G)|+|E(G)|$ minimal with respect to this property, implying that for all $A \not= B$ from $\mathfrak{C}$,
$G[A \cup B]$ has at most one single nontrivial component (note that there might be isolated vertices in $G[A \cup B]$) which induces a path between the vertices $a \in A \cap T$, $b \in B \cap T$ if $ab \in E(H)$
and $E(G[A \cup B])=\emptyset$ otherwise.
In particular, $H=H(G,\mathfrak{C},T)$. 
Now let $Q \not= R$ be the members of $\mathfrak{C}$ with $q \in Q$, $r \in R$. If there exists a vertex $x \in Q \setminus \{q\}$
then $x$ has degree $2$, and we take its neighbors $y,z \in R$, and contract $yxz$ to a single vertex $w$.
For $A \in \mathfrak{C}$ set $A':=(A \setminus \{y,z\}) \cup \{w\}$ if $A=R$, $A':=A \setminus \{x\}$ if $A=Q$, and $A':=A$ otherwise, and 
for $z \in T$ set $z':=w$ if $z \in \{y,z\}$ and $z':=z$ otherwise. Then $\mathfrak{C}':=\{A'\mid A \in \mathfrak{C}\}$ is a coloring of $G'$
and $T':=\{t'\mid t \in T\}$ is a transversal of $\mathfrak{C}'$. 

Next, we show that $H=H(G,\mathfrak{C},T)$ is isomorphic to $H(G',\mathfrak{C}',T')$ (via $t\mapsto t'$). 
Let $s\not= t\in T\setminus\{q,r\}$ with $st\in E(H)$, then neither of $x,y,z$ lays on a $2$-colored $s$,$t$-path $P_{st}$ in $G$, hence, $P_{st}$ is an $s'$,$t'$-path in $G'$. 
If $r\in\{y,z\}$ and $t\in T\setminus\{q,r\}$ such that $rt\in E(H)$, then  a $2$-colored $r$,$t$-path $P_{rt}$ in $G$ translates to a $r'$,$t'$-path if $\{y,z\}\nsubseteq V(P_{rt})$ or there exists a subpath of $P_{rt}$ that is an $r'$,$t'$-path if $\{y,z\}\subseteq V(P_{rt})$. 
If $r\notin\{y,z\}$ and $t\in T\setminus\{q,r\}$ such that $rt\in E(H)$, then  a $2$-colored $r$,$t$-path $P_{rt}$ in $G$ is a $r'$,$t'$-path by possibly replacing a vertex with $w$ if $\{y,z\}\nsubseteq V(P_{rt})$ or, if $\{y,z\}\subseteq V(P_{rt})$, there exist a subpath of $P_{rt}$ from $y$ to $z$ that can be replaced by $w$ to obtain an $r'$,$t'$-path. 
The $2$-colored $r$,$q$-path in $G$ contains $yxz$ as a subpath. By replacing this subpath with $w$, we obtain an $r'$,$q'$-path in $G'$. 
For any $s\not= t\in T$ such that  $st\notin E(H)$, the contraction of $yxz$ to a single vertex $w$ does not produce any new edges between two color classes where there haven't been any edges before. 
Hence, $H$ is isomorphic to $H(G',\mathfrak{C}',T')$.

By choice of $G$, $G'$ has a rooted $H(G',\mathfrak{C}',T')$-certificate. We get a rooted $H$\!-certificate
of $G$ by replacing its bag $B$ containing $w$ --- if any --- with $(B \setminus \{w\}) \cup \{y,x,z\}$, contradiction.
Therefore, $Q=\{q\}$ so that, by assumption, $G-q$ has a rooted $H(G-q,\mathfrak{C} \setminus \{Q\},T \setminus \{q\})$-certificate,
from which we get a rooted $H$\!-certificate by adding the bag $Q=\{q\}$.
\end{proof}

\section{Kempe chains and rooted \boldmath$K_7$-minors}
\label{S2}

If we like to find graphs that fail to have property (*), then we will preferably consider small graphs containing all the required paths between transversal vertices on one hand;
on the other hand, there should not be many edges incident with the transversal vertices. 
A construction that works well is the one presented in this section, which we call $Z(G )$ of a graph $G$. 
The construction is simple to describe; the resulting graph consists of $G$ plus a clone of each of its vertices.

For a graph $G$ we define the graph $Z(G)$ by $V(Z(G)):=V(G) \times \{1,2\}$ and
$E(Z(G)):=\{(x,i)(y,j)\mid xy \in E(G) \wedge (i \not=1 \vee j \not=1)\}$. That is, $Z(G)$ is obtained from the lexicographic product of $G$ with the
graph $(\{1,2\},\emptyset)$ by deleting all edges connecting vertices from $V(G) \times \{1\}$.
For $s=(x,i) \in V(Z(G))$ let us define $\overline{s}:=(x,3-i)$.
It follows that $\mathfrak{C}:=\{\{(x,1),(x,2)\}\mid x \in V(G)\}=\{\{s,\overline{s}\}\mid s \in V(Z(G))\}$ is a coloring of $Z(G)$
--- the {\em canonical coloring} --- 
and that $T:=V(G) \times \{1\}$ is a transversal of $\mathfrak{C}$. Observe that $T$ induces an anticlique in $Z(G)$.
As the union of two color classes $\{(x,1),(x,2)\},\{(y,1),(y,2)\}$ induce \textbf{(i)} a path of length three between its transversal vertices if $xy \in E(G)$ and
\textbf{(ii)} an edgeless graph if $xy \not\in E(G)$, we see that $H:=H(Z(G),\mathfrak{C},T)$ is isomorphic to $G$ (via $(x,1) \mapsto x$).
Moreover, we find a copy of $G$ induced in $Z(G)$ in a very natural way: $Z(G)[V(G) \times \{2\}]$ is isomorphic to $G$ (via $(x,2) \mapsto x$).
We study whether there exists a rooted $H$\!-certificate in $Z(G)$ for different graphs $G$.

The bags of any $H$\!-certificate $c=(V_t)_{t \in T}$ in $Z(G)$
have average order at most $2$. That is, as soon as there are bags of order at least $3$,
there must be bags of order $1$; locally, the inverse implication is almost true, as follows:

\begin{cl}\label{claim1}
If $st \in E(H)$ is not on any triangle of $H$, then $|V_s|=1$ implies $|V_t|\geq 3$.
\end{cl}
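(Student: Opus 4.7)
The plan is to argue by contradiction: assuming $|V_s|=1$ and $|V_t|\leq 2$, I will exhibit a vertex $\bar v\in T\setminus\{s,t\}$ such that $\{s,t,\bar v\}$ is a triangle in $H$ through $st$. Write $s=(x,1)$ and $t=(y,1)$, so $V_s=\{(x,1)\}$. Since $H$ is isomorphic to $G$ via $(z,1)\mapsto z$, the edge $st\in E(H)$ translates to $xy\in E(G)$.

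The subcase $|V_t|=1$ is immediate. Then $V_t=\{t\}$, and the edge required between $V_s$ and $V_t$ in the $H$-certificate would have to be $st$ itself; but $st\notin E(Z(G))$ because $T=V(G)\times\{1\}$ is an anticlique in $Z(G)$, since edges of $Z(G)$ are forbidden when both second coordinates equal $1$.

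So $|V_t|=2$, say $V_t=\{t,v\}$. Connectedness of $V_t$ in $Z(G)$ gives $tv\in E(Z(G))$, and the only neighbours of $t=(y,1)$ in $Z(G)$ are the vertices $(w,2)$ with $w\in N_G(y)$; hence $v=(w,2)$ for some $w\in N_G(y)$. Because $st\notin E(Z(G))$, the required edge between $V_s$ and $V_t$ must be $sv$, which forces $w\in N_G(x)$. Thus $w$ is a common $G$-neighbour of $x$ and $y$, and in particular $w\notin\{x,y\}$, so $\bar v=(w,1)$ is an element of $T$ distinct from $s$ and $t$. Since $wx,wy\in E(G)$, the isomorphism $H\cong G$ yields $s\bar v,\,t\bar v\in E(H)$; together with $st$ this is the forbidden triangle.

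I do not anticipate a serious obstacle here: the argument is entirely local and rests on the explicit structure of $Z(G)$, specifically that $T$ is an anticlique and that every neighbour of a transversal vertex $(y,1)$ has second coordinate $2$ and an underlying $G$-neighbour of $y$. The only small subtlety is to dispose of the case $|V_t|=1$ first, so that one genuinely has a second vertex $v\in V_t$ to identify with a common $G$-neighbour of $x$ and $y$.
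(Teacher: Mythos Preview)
Your argument is correct and follows essentially the same route as the paper's proof: both dispose of $|V_t|=1$ via the fact that $T$ is an anticlique in $Z(G)$, then for $|V_t|=2$ identify the second vertex of $V_t$ as some $(w,2)$ with $w$ a common $G$-neighbour of the underlying vertices of $s$ and $t$, and conclude that $(w,1)$ completes a triangle through $st$ in $H$. The only cosmetic difference is that the paper uses the $\overline{u}$ notation where you work with explicit coordinates.
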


\begin{smallproof}
Suppose that $|V_s|=1$, that is, $V_s=\{s\}$. Since $s,t$ are not adjacent in $Z(G)$, $|V_t| \geq 2$. If $|V_t|=2$, then
$V_t=\{t,u\}$ for some $u \in V(Z(G))$, where $t,u$ and $s,u$ are adjacent in $Z(G)$ so that  $u \in V(Z(G)) \setminus V(H)$, $\overline{u}\neq s$ and
$t,\overline{u}$ must be adjacent in $H$.
Since $s,t,\overline{u}$ do not form a triangle in $H$,
$s,\overline{u}$ are nonadjacent in $H$ so that $s,u$ are nonadjacent in $Z(G)$; consequently, $s$ has no neighbors in $V_t$, contradiction.
This implies $|V_t| \geq 3$ as claimed.
\end{smallproof}

If all bags of $c$ have order $2$, then we look at the function $f\colon V(G) \to V(G)$ defined by $f(x):=y$ if $V_{(x,1)}=\{(x,1),(y,2)\}$.
Since the bags are disjoint, $f$ is an injection and, thus, a permutation of $V(G)$.
Since the bags are connected, $xf(x) \in E(G)$, so that we may represent $f$ as a partial orientation of $G$, where
$xy$ is oriented from $x$ to $y$ if $y=f(x)$ and from $y$ to $x$ if $x=f(y)$ (which may happen simultaneously).
As $c$ is a rooted $H$\!-certificate in $Z(G)$ we know that $xy \in E(G)$ implies that $V_{(x,1)}, V_{(y,1)}$ are adjacent,
which is equivalent to saying that, in $Z(G)$, $f(x)$ is adjacent to one of $y,f(y)$ or $f(y)$ is adjacent to one of $x,f(x)$.
Conversely, if $f$ is a permutation of $V(G)$ such that \textbf{(i)} $xf(x) \in E(G)$ for all $x\in V(G)$ and \textbf{(ii)} $xy \in E(G)$ implies that
$f(x)$ is adjacent to one of $y,f(y)$ or $f(y)$ is adjacent to one of $x,f(x)$, then $V_{(x,1)}:=\{(x,1),(f(x),2)\}$ defines 
an $H$\!-certificate in $Z(G)$. 
Let us call a permutation of $V(G)$ with \textbf{(i)} and \textbf{(ii)} a {\em good} permutation throughout this section.

\begin{cl}\label{claim2}
If $G$ has a good permutation, then every vertex of degree at least $3$ in $G$ is on a cycle of length at most $4$ in $G$.
\end{cl}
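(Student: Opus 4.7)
The plan is to fix a vertex $v$ with $\deg_G(v)\ge 3$, set $w:=f(v)$, and then use condition (ii) applied to a carefully chosen edge $vy$ to produce a cycle of length $3$ or $4$ through $v$. Condition (i) already gives $vw\in E(G)$ and $w\ne v$, so $w$ is automatically a neighbor of $v$.

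First I would locate a suitable neighbor $y\ne w$ of $v$. The key observation is that since $f$ is a permutation, the vertex $f^{-1}(v)$ is unique; since $\deg(v)\ge 3$ supplies at least two neighbors of $v$ distinct from $w$, I can pick a neighbor $y$ of $v$ with $y\ne w$ and $f(y)\ne v$. Making this choice is the one clever step, and it exists precisely because of the degree hypothesis.

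Next, applying (ii) to the edge $vy$ tells me that at least one of $wy$, $wf(y)$, or $vf(y)$ belongs to $E(G)$, and I would extract a short cycle through $v$ in each case. If $wy\in E(G)$, the edges $vw,\,vy,\,wy$ form a triangle through $v$. If $vf(y)\in E(G)$, then together with $vy\in E(G)$ and $yf(y)\in E(G)$ from (i) the vertices $v,y,f(y)$ form a triangle, where distinctness follows since $f(y)\ne v$ by the choice of $y$ and $f(y)\ne y$ because $G$ has no loops. If $wf(y)\in E(G)$, then $v\!-\!w\!-\!f(y)\!-\!y\!-\!v$ is a $4$-cycle through $v$; the four vertices are pairwise distinct because $f(y)\ne v$ by the choice of $y$, and $f(y)\notin\{w,y\}$ since neither $wf(y)$ nor $yf(y)$ is a loop.

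The main obstacle is not any single step of the deduction but rather the bookkeeping of degenerate coincidences (various vertices colliding, or edges degenerating to loops) that could collapse a putative cycle into something shorter. Arranging $f(y)\ne v$ at the outset sidesteps the one coincidence that would actually derail the case analysis; everything else follows from loop-freeness and the injectivity of $f$.
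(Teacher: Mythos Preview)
Your proof is correct and follows essentially the same approach as the paper's: fix the high-degree vertex, let one neighbor be its $f$-image, use injectivity of $f$ together with $\deg\ge 3$ to select a second neighbor $y$ with $f(y)$ not equal to the central vertex, and then apply condition~(ii) to the edge $vy$ and case-split. Your write-up is in fact slightly more explicit than the paper's in verifying that the vertices of the resulting $3$- or $4$-cycle are pairwise distinct.
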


\begin{smallproof}
Let $f$ be a good permutation and suppose that $w$ is a vertex of degree at least $3$ in $G$ and let $x,y,z$ be three neighbors of $w$ in $G$, where $f(w)=x$.
We may assume that $f(y) \not= w$ (otherwise $f(z) \not= w$ and we swap the roles of $y,z$). If $u:=f(y)\neq w$ is a neighbor of $w$
then by \textbf{(i)}, $w,y,u$ form a triangle and we are done. Otherwise, $\{w,f(w)=x\}$, $\{y,f(y)=u\}$ are disjoint, and \textbf{(ii)} implies
that, in $G$, $u$ is a neighbor of $w$ or $x$, or that $x$ is a neighbor of $y$ or $u$; in either case, $w$ is on a cycle of length $3$ or $4$.
This proves Claim~\ref{claim2}. 
\end{smallproof}

Let us specialize the considerations to the graph $G$ obtained from a cycle $G'$ of length $6$ by adding another vertex $x$ and two edges
connecting $x$ to two vertices $a,b$ at distance $3$ on $G'$.
Assume, to the contrary, that $Z(G)$ has an $H$\!-certificate $(V_t)_{t \in T}$ with $T=V(G) \times \{1\}$.
Let $A$ be the set of vertices $t \in T$ with $|V_t|=1$.
By Claim~\ref{claim2}, $G$ cannot have a good permutation, so $|A| \geq 1$. 
$A$ is an anticlique in $H$ (and in $Z(G)$), so $|A| \leq 3$, and, by Claim~1, $|V_s| \geq 3$ for every vertex $s$ in the neighborhood of $A$ in $H$.
For each case $|A|=1,|A|=2,|A|=3$ one readily verifies $|N_H(A)| \geq |A|+1$.
It follows $q:=\sum_{t \in T} |V_t| \geq 3 \cdot (|A|+1)+2 \cdot (7-2|A|-1)+1 \cdot |A|=15$,
contradicting $q \leq |V(Z(G))|=14$. It follows that $G$ does not have property (*).
By Theorem~\ref{Tsubgraphs} (property (*) inherits to spanning subgraphs) we conclude that $K_7$ does not have it either.
For a constructive example, we could take $Z(G)$ with $\mathfrak{C}$ and $T$ as above and just add all edges
between transversal vertices $(x,1),(y,1)$ with $xy \not\in E(G)$ to obtain a graph $G'$ without a rooted $H(G',\mathfrak{C},T)$-certificate,
where $H(G',\mathfrak{C},T)$ is now the complete graph on the seven vertices from $T$. So we have proved:

\begin{theorem}
  \label{T1}
  $K_7$ does not have property (*).
\end{theorem}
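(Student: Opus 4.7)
The plan is to apply the construction of $Z(G)$ to the specific graph $G$ introduced above: a $6$-cycle $G' = v_0v_1\cdots v_5v_0$ with an apex $x$ joined to $a := v_0$ and $b := v_3$. With the canonical coloring $\mathfrak{C}$ and transversal $T := V(G) \times \{1\}$, one has $H := H(Z(G),\mathfrak{C},T) \cong G$, and the goal is to show that $Z(G)$ admits no rooted $H$-certificate. Two structural features of $G$ will do the work: it is triangle-free (the only cycles are $G'$ and the two $5$-cycles $x,a,v_1,v_2,b$ and $x,a,v_5,v_4,b$), so Claim~\ref{claim1} applies to every edge of $H$; and a short finite check confirms $|N_G(A)| \geq |A|+1$ for every anticlique $A$ of $G$ with $|A| \in \{1,2,3\}$.

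Assuming for contradiction that a rooted $H$-certificate $(V_t)_{t \in T}$ exists, I will exploit the bag budget $\sum_t |V_t| \leq |V(Z(G))| = 14 = 2|T|$ by splitting into two cases. First, if every bag has order exactly $2$, the considerations preceding Claim~\ref{claim2} produce a good permutation of $V(G)$; but $a$ has degree $3$ in $G$ and lies on no cycle of length at most $4$, contradicting Claim~\ref{claim2}. Second, set $A := \{t \in T \mid |V_t| = 1\}$; then $A$ is an anticlique in $H \cong G$, so $|A| \leq 3$. Since $G$ is triangle-free, Claim~\ref{claim1} forces $|V_t| \geq 3$ for every $t \in N_H(A)$, so combined with $|N_H(A)| \geq |A|+1$ one obtains
\[
\sum_{t \in T} |V_t| \;\geq\; |A| + 3\,|N_H(A)| + 2\bigl(7 - |A| - |N_H(A)|\bigr) \;=\; 14 - |A| + |N_H(A)| \;\geq\; 15,
\]
contradicting the budget $\leq 14$.

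This shows $G$ lacks property (*). Since $G$ is a spanning subgraph of $K_7$, the inheritance theorem stated at the start of the paper gives that $K_7$ also lacks property (*), as required. I expect the only real obstacle to be the neighborhood inequality $|N_G(A)| \geq |A|+1$ on small anticliques; this is a quick enumeration, since the maximal anticliques of $G$ are the two alternating triples of $G'$ together with the triples $\{x,v_i,v_j\}$ where $v_i,v_j$ range over non-adjacent pairs in $\{v_1,v_2,v_4,v_5\}$.
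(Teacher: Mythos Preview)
Your proposal is correct and follows essentially the same route as the paper: the same auxiliary graph $G$, the same use of Claims~\ref{claim1} and~\ref{claim2}, the same counting contradiction $\sum_t |V_t| \geq 15 > 14$, and the same appeal to inheritance to pass from $G$ to $K_7$. One small inaccuracy in your closing remark: $\{a,b\} = \{v_0,v_3\}$ is also a maximal anticlique of $G$ (no vertex is non-adjacent to both), so your list of maximal anticliques is incomplete---but since $|N_G(\{v_0,v_3\})| = 5 \geq 3$, this does not affect the verification of $|N_G(A)| \geq |A|+1$.
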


Let $d \geq 3$. We now specialize to a connected, $d$-regular, nonbipartite graph $G$ of girth at least $5$
and assume, to the contrary, that $Z(G)$ has an $H$\!-certificate $c=(V_t)_{t \in T}$.
Let $A,B,C$ be the set of vertices $x \in V(G)$ with $|V_{(x,1)}|$ being $1$, $2$, and at least $3$, respectively.
By Claim~\ref{claim2}, $G$ does not have a good permutation, so that $|A| \geq 1$.
The vertices from $A$ and $A\times\{1\}$ induce an edgeless graph in $G$ and $Z(G)$, respectively, and the neighbors of $A$ in $G$ are all from $C$ by Claim~\ref{claim1}.
The number of edges between $A$ and $C$ in $G$ is thus equal to $d|A|$ and at most $d|C|$ with equality only if every vertex from $C$ has
all its $d$ neighbors in $A$. However, in the latter case, $G[A \cup C]$ is $d$-regular and bipartite and $B$ is empty as $G$ is connected,
so that $G$ is bipartite, contradiction. It follows $d|A|<d|C|$ and, consequently,
$|Z(G)| \geq \sum_{t \in T} |V_t| \geq |A|+2|B|+3|C|>2|A|+2|B|+2|C|=|Z(G)|$, contradiction. So we have proved

\need{3cm}
\begin{theorem}
  \label{T2}
  If $G$ is connected, $d$-regular with $d \geq 3$, nonbipartite of girth at least $5$, then it does not have property (*).
\end{theorem}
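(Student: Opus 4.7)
The plan is to follow the template of the $K_7$ argument: assume for contradiction that $G$ has property (*), pick a rooted $H$\!-certificate $c=(V_t)_{t\in T}$ in $Z(G)$ for the canonical coloring (so $T=V(G)\times\{1\}$ and $H\cong G$), and contradict the obvious bound $\sum_{t\in T}|V_t|\le|V(Z(G))|=2|V(G)|$ by showing that the right-hand side is strictly exceeded.

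First, I would partition $V(G)$ according to bag size: $A:=\{x\in V(G):|V_{(x,1)}|=1\}$, $B:=\{x:|V_{(x,1)}|=2\}$, and $C:=\{x:|V_{(x,1)}|\ge 3\}$. Since $G$ is $d$-regular with $d\ge 3$ and has girth at least $5$, every vertex has degree $\ge 3$ but lies on no cycle of length $\le 4$, so Claim~\ref{claim2} rules out a good permutation and forces $A\ne\emptyset$. The same girth condition makes $H\cong G$ triangle-free, so Claim~\ref{claim1} applies at every edge of $H$ incident to $A$, yielding $N_G(A)\subseteq C$.

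The crux of the argument is the strict inequality $|A|<|C|$. Counting edges in the bipartite subgraph of $G$ between $A$ and $C$: each vertex of $A$ contributes exactly $d$ such edges (since all its neighbors lie in $C$), so this count equals $d|A|$ and is bounded above by $d|C|$. If equality held, then every vertex of $C$ would send all $d$ of its edges into $A$, so $G[A\cup C]$ would be a $d$-regular bipartite subgraph with no edge leaving $A\cup C$; by connectedness of $G$ this forces $B=\emptyset$, whence $G=G[A\cup C]$ would be bipartite, contradicting the hypothesis. Hence $|A|<|C|$ strictly.

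Putting this together,
\[
2|V(G)|=|V(Z(G))|\ge\sum_{t\in T}|V_t|\ge |A|+2|B|+3|C|>2|A|+2|B|+2|C|=2|V(G)|,
\]
the desired contradiction. The only place where both connectedness and non-bipartiteness of $G$ are used is the strict inequality $|A|<|C|$, and I expect this to be the main (indeed only) subtlety; the rest is the same counting trick already deployed for the concrete graph $G$ in the proof of Theorem~\ref{T1}.
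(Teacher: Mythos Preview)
Your proposal is correct and is essentially identical to the paper's own argument: the same partition of $V(G)$ into $A$, $B$, $C$ by bag size, the same appeal to Claims~\ref{claim1} and~\ref{claim2} (using girth $\ge 5$ to kill good permutations and to make every edge of $H$ triangle-free), the same edge-count $d|A|\le d|C|$ between $A$ and $C$, and the same use of connectedness plus non-bipartiteness to force the inequality to be strict, yielding the contradiction $2|V(G)|\ge\sum_t|V_t|\ge|A|+2|B|+3|C|>2|V(G)|$. You have also correctly isolated the one place where both extra hypotheses enter.
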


The smallest graph meeting the assumptions of Theorem~\ref{T2} is, incidentally, the Petersen graph.

\section{Unicyclically arranged Kempe chains}
\label{S3}

We continue with a number of positive results. 
The main result of Fabila-Monroy and Wood in~\cite[Theorem~8]{FabilaMonroyWood2013} states that for any 3-connected graph $G$ and distinct vertices $t_1,t_2,t_3,t_4\in V(G)$
such that two vertex disjoint $t_i$,$t_j$-path and $t_k$,$t_l$-path exist for each choice of distinct $i,j,k,l\in\{1,2,3,4\}$,
then there exists a rooted $H$\!-certificate where $H$ is the complete graph on $\{t_1,\dots,t_4\}$. 
This generalizes to:
\begin{theorem}
  \label{T3}
  Every graph on at most four vertices has property (*).
\end{theorem}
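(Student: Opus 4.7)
The plan is to reduce to Theorem~8 of Fabila-Monroy and Wood stated just above. By Theorem~1, property~(*) inherits to subgraphs, and every graph on at most four vertices is a subgraph of $K_4$, so it suffices to show that $K_4$ has property~(*).

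Fix $G$, a coloring $\mathfrak{C}=\{C_1,\ldots,C_4\}$, and a transversal $T=\{t_1,\ldots,t_4\}$ with $t_i\in C_i$ such that each pair $\{t_i,t_j\}$ lies in a common Kempe chain. For each $\{i,j\}$ pick a $t_i$,$t_j$-path $P_{ij}$ inside $G[C_i\cup C_j]$. The key observation is that for each of the three partitions of $\{1,2,3,4\}$ into two pairs $\{i,j\}$ and $\{k,l\}$, the paths $P_{ij}$ and $P_{kl}$ lie in disjoint color classes and are therefore vertex-disjoint. Hence $T$ meets the hypothesis of Theorem~8 of Fabila-Monroy and Wood, and in the case that $G$ is 3-connected that theorem immediately supplies a rooted $K_4$-certificate.

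To treat the non-3-connected case I would pass to a counterexample $(G,\mathfrak{C},T)$ of minimum $|V(G)|+|E(G)|$ and show $G$ must in fact be 3-connected. Since each pair of transversal vertices is joined by a Kempe path, $G$ is connected. If $v$ were a cut vertex, then each $P_{ij}$ would have to stay on the side of $v$ containing both of its endpoints, as no simple path can re-enter $v$; combining this with the existence of all six $P_{ij}$, one sees that all of $T$ must lie on a single side of $v$, and restricting to that side yields a smaller counterexample. A $2$-cut $\{u,v\}$ is handled similarly but more carefully: a Kempe path $P_{ij}$ that leaves the $T$-side through $u$ and returns through $v$ forces both $u$ and $v$ to carry a color in $\{i,j\}$, and one then restricts to the $T$-side and appropriately modifies the graph (e.g.\ by adding the edge $uv$ when $u,v$ carry distinct colors, or by selecting a different $T$-side when they coincide) so as to preserve all six Kempe-chain connections.

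The main obstacle will be the cut reduction, specifically verifying that the modifications made at a $2$-cut preserve all six Kempe-chain hypotheses simultaneously without creating edges inside a single color class. Once this reduction is established, minimality forces $G$ to be 3-connected, and Theorem~8 of Fabila-Monroy and Wood concludes the proof.
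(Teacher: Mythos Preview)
Your overall approach is the paper's: reduce to $K_4$, invoke Fabila-Monroy--Wood in the $3$-connected case, and eliminate small separators via a minimal-counterexample reduction. Two points in your reduction need repair.

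First, your cutvertex claim that ``all of $T$ must lie on a single side of $v$'' is false. Since $v$ lies in some color class $C_i$, the vertex $t_i$ may well sit in a different component of $G-v$ from $t_j,t_k,t_l$; each path $P_{ij}$ then simply passes through $v$. What \emph{is} true is that $T\setminus\{t_i\}$ lies in a single component $C$, and the correct move (as in the paper) is to contract all of $V(G)\setminus V(C)$ --- including $t_i$ if it happens to lie there --- to a single vertex, which becomes the new transversal vertex of color $i$. Mere restriction does not suffice.

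Second, your $2$-cut sketch misses the hardest subcase. With $S=\{u,v\}$, $u\in C_i$, $v\in C_j$ of distinct colors, the two remaining transversal vertices indeed lie in one component $C$, and one deletes $X:=V(G)\setminus(V(C)\cup S)$ and adds the edge $uv$. But $t_i$ and $t_j$ need not lie in $C\cup S$. If only one of them is in $X$, absorbing $X$ into the bag containing the corresponding separator vertex works, as you suggest. If \emph{both} $t_i,t_j\in X$, however, the reduced certificate is rooted at $u$ and $v$, and you must extend these two bags into $X$ so that each captures exactly one of $t_i,t_j$ while the enlarged bags remain adjacent in $G$. The paper does this by applying Menger's theorem (using $2$-connectivity of the minimal counterexample) to obtain two disjoint $\{u,v\}$--$\{t_i,t_j\}$ paths in $G[S\cup X]$, and then argues separately, again via $2$-connectivity, that the two resulting bags touch. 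This is the genuine technical content of the reduction and is not covered by the considerations you list.
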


\begin{proof}
By Theorem~\ref{Tsubgraphs}, it suffices to prove that $K_4$ has property (*). 
Let $G$ be a graph, $\mathfrak{C}$ be a coloring of $G$ with $|\mathfrak{C}|=4$, and let $T$ be a transversal of $\mathfrak{C}$.
For $x \in V(G)$, let $A_x$ denote the member of $\mathfrak{C}$ containing $x$, and
suppose that for all $x \neq y$ from $T$ there exists an $x$,$y$-path $P_{xy}$ in $G[A_x \cup A_y]$,
that is, $H:=H(G,\mathfrak{C},T)$ is a complete graph on four vertices.
Choose $G$ to be a counterexample with $|V(G)|+|E(G)|$ minimal. Then $G$ is connected and $E(G)=\bigcup_{x\neq y}E(P_{xy})$.
By the previously stated result of Fabila-Monroy and Wood, $G$ is not 3-connected. 

We may assume that $G$ has a separator $S$ with $|S|\leq 2$. 
If $S \subseteq A_x$ for some $x \in T$, then $T \setminus \{x\}$ is contained in some component $C$ of $G-S$.
Let $G'$ be the graph obtained from $G$ by contracting $X:=V(G) \setminus V(C)$ to a single vertex $w$.
For $A \in \mathfrak{C}$ set $A':=(A \setminus X) \cup \{w\}$ if $A=A_x$ and $A':=A \setminus X$ otherwise, and 
for $z \in T$ set $z':=w$ if $z \in X$ and $z':=z$ otherwise. Then $\mathfrak{C}':=\{A'\mid A \in \mathfrak{C}\}$ is a coloring of $G'$
and $T':=\{t'\mid t \in T\}$ is a transversal of $\mathfrak{C}'$. 
For all $u' \neq y'$ from $T\setminus\{x'\}$ there exists the same $u'$,$y'$-path $P_{u'y'}$ in $G'[A'_{u'} \cup A'_{y'}]$ as in $G$.
If $x'\neq w$, then for all $y'\in T\setminus\{x'\}$, either an $x'$,$y'$-path $P_{x'y'}$ is still contained in $C$ 
or there exist an $x'$,$S$-path $P_1$ and a $y'$,$S$-path $P_2$ in $G[A_x \cup A_y]$, which can be concatenated to an $x'$,$y'$-path $P_{x'y'}$ in $G'[A'_{x'} \cup A'_{y'}]$ in a natural way. 
If $x'=w$, then for all $y'\in T\setminus\{x'\}$ a shortest $y'$,$S$-path $P_1$ in $G[A_x \cup A_y]$ (exists because of an  $x$,$y$-path $P_{xy}$ in $G[A_x \cup A_y]$) translates straightforward to an  $x'$,$y'$-path $P_{x'y'}$ in $G'[A'_{x'} \cup A'_{y'}]$.
Thus, $H':=H(G,\mathfrak{C}',T')$ is a complete graph on $T'$.
By the choice of $G$, there exists a rooted $H'$\!-certificate in $G'$, that can be extended to a rooted $H$\!-certificate
of $G$ by replacing its bag $B$ containing $w$ --- if any --- with $(B \setminus \{w\}) \cup X$, contradiction.

Otherwise, $G$ is 2-connected and there exist $x \neq y$ from $T$
such that each of $S \cap A_x$ and $S \cap A_y$ consists of a single vertex $x_0$, $y_0$, respectively.
Again, $T \setminus \{x,y\}$ is contained in the same component $C$ of $G-S$.
Let $G'$ be the graph obtained from $G$ by deleting $X:=V(G) \setminus (V(C) \cup S)$ and adding an edge $x_0y_0$ (if it does not already exist).
For $A \in \mathfrak{C}$, let $A':=A \setminus X$, so that $\mathfrak{C}':=\{A'\mid A \in \mathfrak{C}\}$ is a coloring of $G'$.
For $z \in T$, set $z':=z_0$ if $z \in \{x,y\} \cap X$ and $z':=z$ otherwise, so that $T':=\{z'\mid z \in T\}$ is a transversal of $\mathfrak{C}'$.
If $x'\neq x_0$, then for all $u'\in T\setminus\{x',y'\}$, either an $x'$,$u'$-path $P_{x'u'}$ is still contained in $C$ 
or there exist an $x'$,$x_0$-path $P_1$ and a $u'$,$x_0$-path $P_2$ in $G[A_x \cup A_y]$, which can be glued together to an $x'$,$u'$-path $P_{x'u'}$ in $G'[A'_{x'} \cup A'_{u'}]$.
If $x'=x_0$, then for all $u'\in T\setminus\{x',y'\}$ a shortest $u'$,$x_0$-path $P_1$ in $G[A_x \cup A_u]$ (exists because of an  $x$,$u$-path $P_{xu}$ in $G[A_x \cup A_u]$) is an  $x'$,$u'$-path $P_{x'u'}$ in $G'[A'_{x'} \cup A'_{u'}]$.
The existence of a $y'$,$u'$-path for all $u'\in T\setminus\{x',y'\}$ holds analogously and the same arguments verifies the existence of an  $x'$,$y'$-path $P_{x'y'}$ in $G'[A'_{x'} \cup A'_{y'}]$ using the edge $x_0y_0$ if necessary. 
Thus, $H':=H(G',\mathfrak{C}',T)$ is a complete graph on $T'$.
By the choice of $G$, $G'$ admits a rooted $H'$\!-certificate $c'$. 
If $X$ does not contain both $x$ and $y$, say, $y \notin X$, then 
$c'$ can be extended to a rooted $H$\!-certificate of $G$ by replacing its bag $B$ containing $x_0$ --- if any --- with $B \cup X$, contradiction.
If, otherwise, $X$ contains both $x$ and $y$, then there are two vertex disjoint paths $P_1$ and $P_2$ connecting $x_0$ and $y_0$ to $\{x,y\}$, respectively (by 2-connectivity of $G$ and Menger's Theorem). 
It is obvious that $V(P_1),V(P_2)\subseteq S\cup X$. 
Let $X_0:=V(P_1)$ and let $Y_0$ be the vertex set
of the component of $G[S \cup X]- X_0$ containing $P_2$, and therefore also $y_0$ and $\{x,y\}\setminus X_0$. 
Since $G$ is $2$-connected and all neighbors of $Y_0 \setminus \{y_0\} \neq \emptyset$
are in $X_0 \cup \{y_0\}$, $X_0$ and $Y_0$ are adjacent in $G$. 
Now, a rooted $H$\!-certificate can be obtained from $c'$ by replacing the bags $B_1$ and $B_2$ containing $x_0$ and $y_0$ with 
$B_1\cup X_0$ and $B_2\cup Y_0$, respectively.
Thus, all bags contain exactly one vertex from $T$ and are pairwise adjacent, contradiction. 
\end{proof}

Here is an infinite class of connected graphs for which (*) is true.

\begin{lemma}
  \label{L1}
  Every cycle has property (*).
\end{lemma}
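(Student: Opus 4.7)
The plan is to dispose of the small cases $n\le 4$ via Theorem~\ref{T3} and then, for $n\ge 5$, argue by induction on the minimum of $\sum_i |V(P_i)|$ over all admissible choices of Kempe paths $P_i\subseteq G[A_i\cup A_{i+1}]$ connecting $t_i$ and $t_{i+1}$, where $A_j$ denotes the color class of $t_j\in T$. I would take a counterexample $(G,\mathfrak{C},T)$, with $T=\{t_0,\dots,t_{n-1}\}$, realizing this minimum, with fixed $P_i$'s for which $G=\bigcup_i P_i$. The key structural observation is that for $|i-j|\ge 2\pmod n$ the color palettes $\{A_i,A_{i+1}\}$ and $\{A_j,A_{j+1}\}$ are disjoint, so $V(P_i)\cap V(P_j)=\emptyset$; hence the only possible intersections are between consecutive Kempe paths and lie entirely inside a single color class $A_i$.

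First I would handle the case where $P_{i-1}\cap P_i=\{t_i\}$ for every $i$: then $\bigcup_i P_i$ is a simple cycle traversing $t_0,\dots,t_{n-1}$ in cyclic order, and the sets $V_i:=\{t_i\}\cup(\text{internal vertices of }P_i)$ are arcs of this cycle and immediately form a rooted $C_n$-certificate, contradicting the counterexample assumption. Otherwise some vertex $v\in(P_i\cap P_{i+1})\setminus\{t_{i+1}\}$ exists, necessarily $v\in A_{i+1}$. Replacing $t_{i+1}$ by $v$ in the transversal yields $T':=(T\setminus\{t_{i+1}\})\cup\{v\}$ together with strictly shorter Kempe paths $P_i[t_i,v]$ and $P_{i+1}[v,t_{i+2}]$, so $(G,\mathfrak{C},T')$ admits a strictly smaller minimum sum. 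By the inductive hypothesis it is not a counterexample, hence possesses a rooted $C_n$-certificate $(V_u)_{u\in T'}$. I would then lift this to a certificate for $T$ by enlarging the bag $V_v$ through the addition of $t_{i+1}$ and one of the two $v$--$t_{i+1}$ sub-paths $\sigma\subseteq P_i$ or $\tau\subseteq P_{i+1}$; the resulting bag is relabeled $V_{t_{i+1}}$ and all other bags are kept unchanged.

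The hard part will be showing that this lift preserves the disjointness of the bags. An internal vertex of $\sigma$ lies in $A_i\cup A_{i+1}$ (respectively $A_{i+1}\cup A_{i+2}$ for $\tau$), and by the color-disjointness fact such a vertex can only appear in the bags $V_{t_i}$ or $V_{t_{i+2}}$ of the certificate for $T'$. I would argue that at least one of $\sigma,\tau$ avoids these conflicts entirely, possibly after a small local adjustment of the neighboring bags along the Kempe paths $P_{i-1}$ or $P_{i+2}$. Making this local adjustment precise and verifying that the resulting tuple remains a valid rooted $C_n$-certificate for $T$ is the technical heart of the argument.
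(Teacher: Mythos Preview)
Your outline diverges from the paper's argument, and the divergence is precisely where the real difficulty lies.  The paper takes a counterexample $G$ minimal in $|V(G)|$, first contracts any degree-$2$ non-transversal vertex together with its two neighbours (which lie in a single adjacent colour class), and thereby forces every vertex of $A_i\setminus\{t_i\}$ to have degree~$4$, i.e.\ to lie on both $P_{i-1}$ and $P_i$.  From this it follows that all $|A_i|$ are equal to some $d$; if $d=1$ then $G=H$ and we are done, while if $d\ge 2$ one deletes the entire transversal $T$ from $G$, takes $t_i'$ to be the neighbour of $t_{i-1}$ on $P_{i-1}$, and applies minimality to $G-T$.  The lift is then trivial: each $t_{i-1}$ has been \emph{removed} from the graph before the smaller certificate was produced, so it lies in no bag and can simply be appended to the bag containing its neighbour $t_i'$.

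Your lift, by contrast, is not justified, and this is a genuine gap rather than a routine verification.  The inductive hypothesis yields only \emph{some} rooted $C_n$-certificate $(V_u)_{u\in T'}$ in $G$; it gives no locality whatsoever for the bags.  Your claim that an internal vertex of $\sigma\subseteq A_i\cup A_{i+1}$ ``can only appear in the bags $V_{t_i}$ or $V_{t_{i+2}}$'' does not follow from anything you have assumed: a bag is an arbitrary connected subset of $V(G)$, and nothing prevents $V_{t_0}$, say, from containing vertices of $A_{i+1}$ that happen to lie on $\sigma$.  Worse, $t_{i+1}$ itself is still a vertex of $G$ and may already sit in some bag $V_{t_j}$ with $j\ne i+1$, in which case merely enlarging $V_v$ by a path to $t_{i+1}$ destroys disjointness.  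The ``small local adjustment'' you invoke would have to control the global shape of an arbitrary certificate, and you give no mechanism for doing so.  One could try to salvage the scheme by strengthening the inductive statement (e.g.\ asserting the existence of a certificate with each bag contained in two consecutive Kempe paths), but then both the base case and the inductive step must be reproved under that stronger conclusion.  The paper's device of \emph{deleting} the old roots before invoking minimality is exactly what makes the re-insertion step automatic; your replacement of a single root inside the same graph does not enjoy this property.
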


\begin{proof}
Given $\ell$, we have to prove for every graph $G$, every coloring $\mathfrak{C}=\{A_0,\dots,A_{\ell-1}\}$ and every choice
$t_i \in A_i$ for $i \in \{0,\dots,\ell-1\}$ such that there exists a $t_i$,$t_{i+1}$-path $P_i$ in $G[A_i \cup A_{i+1}]$ for all $i \in \{0,\dots,\ell-1\}$,
indices modulo $\ell$, there exists a rooted $H:=(\{t_0,\dots,t_{\ell-1}\},\{t_0t_1,t_1t_2,\dots,t_{\ell-2}t_{\ell-1},t_{\ell-1}t_0\})$-certificate.
Suppose that $G$ was a counterexample with $|V(G)|+|E(G)|$ minimal. Then $E(G)=E(P_0) \cup \dots \cup E(P_{\ell-1})$.
It follows that very vertex has degree at least $2$.
If $x \in A_i \setminus \{t_i\}$ had only two neighbors, say, $y,z$ in $A_j$, where $j \in \{i+1,i-1\}$, then let $G'$ be obtained from $G$ by contracting $yxz$
to a new vertex $w$, define $A_i':=A_i \setminus \{x\}$, $A_j':=(A_j \setminus \{y,z\}) \cup \{w\}$, $A_k':=A_k$ for $k \in \{0,\dots,\ell-1\} \setminus \{i,j\}$, $t_k':=w$ if $k=j \wedge t_k \in \{y,z\}$, and $t_k':=t_k$ otherwise. 
Letting $H':=(\{t'_0,\dots,t'_{\ell-1}\},\{t'_0t'_1,t'_1t'_2,\dots,t'_{\ell-2}t'_{\ell-1},t'_{\ell-1}t'_0\})$,
we know that, by the choice of $G$, there exists a rooted $H'$\!-certificate in $G'$, from which we can construct a rooted $H$\!-certificate
of $G$ by replacing its bag $B$ containing $w$ --- if any --- with $(B \setminus \{w\}) \cup \{y,x,z\}$, contradiction.
Hence we may assume that every vertex in $A_i \setminus \{t_i\}$ has degree $4$, that is, it is on both $P_i$ and $P_{i-1}$.
In particular, all $A_i$ have the same order $d \geq 1$. If $d=1$, then $G=H$, so that $G$ is not a counterexample, contradiction.
Hence $d \geq 2$, and we consider $G':=G - \{t_0,\dots,t_{\ell-1}\}$, $A_i':=A_i \setminus \{t_i\}$ and let $t'_i \in A_i'$ be the
neighbor of $t_{i-1}$ on $P_{i-1}$. As $t'_{i+1}$ is on $P_i - \{t_i,t_{i+1}\}=G[A'_i \cup A'_{i+1}]$, we know by choice of $G$
that $G'$ has a rooted $H'$\!-certificate ($H'$ as above), from which we get a rooted $H$\!-certificate of $G$ by extending the bag
containing $t'_i$ by the vertex $t_{i-1}$, contradiction.
\end{proof}

Lemma~\ref{L1} generalizes to unicylic graphs, as follows.

\begin{theorem} 
  \label{T4}
  Every (connected) graph with at most one cycle has property (*).
\end{theorem}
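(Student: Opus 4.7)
By Theorem~\ref{T1}, it suffices to treat connected $K$, so assume $K$ is a connected graph with at most one cycle. The plan is to proceed by induction on $|V(K)|$; the base cases are $|V(K)|=1$ (trivial: take $V_t:=\{t\}$) and $K$ being a cycle (by Lemma~\ref{L1}). Otherwise $K$ has a leaf $t'$ with unique neighbor $t$, and $K-t'$ is a smaller connected graph with at most one cycle which enjoys (*) by induction. Suppose for contradiction that $K$ does not have (*) and pick a counterexample $(G,\mathfrak{C},T)$ with $H\cong K$ a spanning subgraph of $H(G,\mathfrak{C},T)$, minimum in $(|V(G)|,|E(G)|)$ lexicographically; identify $V(K)$ with $T$. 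For each $st\in E(H)$ fix a Kempe witness path $P_{st}$ in $G[A_s\cup A_t]$ (writing $A_s$ for $s$'s class). By minimality every edge of $G$ lies on some fixed witness path and every vertex of $G$ lies on some witness path. Since $t'$ is a leaf, the only witness path involving $A_{t'}$ is $P:=P_{tt'}$, so every $A_{t'}$-incident edge of $G$ lies on $P$ and $A_{t'}\subseteq V(P)$. Writing $P = u_0u_1\ldots u_{2p+1}$ with $u_0=t$, $u_{2p+1}=t'$, we therefore have $A_{t'}=\{u_1,u_3,\ldots,u_{2p+1}\}$.

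If $p\geq 1$, the inner vertex $u_1\in A_{t'}$ is not in $T$ and, by the above, has only the two neighbors $u_0,u_2$ in $G$. I would contract $\{u_0,u_1,u_2\}$ to a single new vertex $w$, set $A_t^*:=(A_t\setminus\{u_0,u_2\})\cup\{w\}$, $A_{t'}^*:=A_{t'}\setminus\{u_1\}$, leave the other classes unchanged, and let $w$ play $t$'s role in $T^*:=(T\setminus\{t\})\cup\{w\}$. Calling the renamed copy of $H$ (with $t$ replaced by $w$) $H^*$, we have that $H^*\cong K$ is a spanning subgraph of $H(G^*,\mathfrak{C}^*,T^*)$: the edge $wt'$ is witnessed by $w,u_3,u_4,\ldots,u_{2p+1}$, any other $H$-edge $ts''$ whose witness path used $u_2$ now uses $w\in A_t^*$ in its place (so the Kempe chain survives as a walk), and $H$-edges not incident to $t$ are unaffected. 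Any rooted $H^*$\!-certificate in $G^*$ would lift to a rooted $H$\!-certificate of $G$ by replacing $w$'s bag with $(V_w^*\setminus\{w\})\cup\{u_0,u_1,u_2\}$, which remains connected because $u_0u_1u_2$ is a path in $G$. This would contradict $G$ being a counterexample, so $G^*$ itself must be a strictly smaller counterexample, contradicting minimality.

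Hence $p=0$, so $A_{t'}=\{t'\}$ and $tt'\in E(G)$. Applying property~(*) for $K-t'$ (available by induction) to $(G-t',\mathfrak{C}\setminus\{A_{t'}\},T\setminus\{t'\})$ --- whose hypotheses hold since every $H$-edge not incident to $t'$ has a Kempe witness avoiding $A_{t'}$ --- yields a rooted $(H-t')$\!-certificate $(V_s)_{s\in T\setminus\{t'\}}$ in $G-t'$. Setting $V_{t'}:=\{t'\}$ extends this to a rooted $H$\!-certificate of $G$: the singleton is connected, contains $t'$, is disjoint from the other bags (since $t'\notin V(G-t')$), and is adjacent to $V_t\ni t$ through the edge $tt'$; contradiction. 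The main obstacle in this plan is the contraction step, where $w$ must be placed in the class of $t$ rather than $t'$ to preserve the Kempe chains through $u_2$ that correspond to $H$-edges $ts''$ with $s''\neq t,t'$; this works because $u_2$'s external neighbors lie outside $A_t$ by the anticlique property and therefore correctly become external neighbors of $w\in A_t^*$.
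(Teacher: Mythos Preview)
Your proof is correct and follows essentially the same approach as the paper's: both reduce to a leaf $t'$ of $K$, argue by minimality that any non-transversal vertex in $A_{t'}$ has degree $2$ with both neighbors in $A_t$, contract such a triple to force $A_{t'}=\{t'\}$, and then invoke the inductive hypothesis on $K-t'$. The only differences are cosmetic --- you contract the specific triple $u_0u_1u_2$ starting at the transversal vertex $t$, whereas the paper contracts an arbitrary $yxz$ with $x\in A_{t'}\setminus\{t'\}$ --- and your reference to Theorem~\ref{T1} should point to the (unlabeled) inheritance theorem rather than the $K_7$ result.
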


\begin{proof}
Let $K$ be a connected graph with at most one cycle. 
We proceed by induction on the number of edges not laying on a cycle of $K$. 
By Lemma~\ref{L1} and the fact that property (*) holds for $K_1$, we conclude that $K$ has property (*) if there is no edge not laying on a cycle. 
Otherwise let $q\in V(K)$ be a vertex of $K$ of degree $1$. By induction $K-q$ has property (*) and so has $K$ by Lemma~\ref{Lpending}. 
\end{proof}

\section{The graphs \boldmath$Z(G)$ for \boldmath$|V(G)| \leq 6$}

One could ask if Theorem~\ref{T1} extends to smaller complete graphs or, alternatively, if the bound (``four'')
in Theorem~\ref{T3} can be increased. Both questions need new methods:
In this section, we will see that the method used in Section~\ref{S2} to identify graphs not satisfying (*) 
does not work for graphs on less than seven vertices, whereas, in the next section,
we will collect our knowledge on (*) for graphs on five vertices.

Let $H$ be a graph and $A\subseteq V(H)$ an anticlique in $H$. 
Let us call $A$ {\em matchable} if there exists a matching $M$ from $V(H) \setminus A$ into $A$ (covering $V(H) \setminus A$, but not necessarily covering $A$).
We start with another positive result if a graph contains a matchable anticlique. 

\begin{lemma}
  \label{L2}
  Let $G$ be a graph, $\mathfrak{C}$ be a coloring of $G$, let $T$ be a transversal of $\mathfrak{C}$ and $H:=H(G,\mathfrak{C},T)$.
  Assume that $A$ is a matchable anticlique in $H$ and $M$ a matching covering $V(H) \setminus A$. 

  Suppose that for every edge $st \in M$ where $s \in V(H) \setminus A$ and $t \in A$, it holds that
  $G[P \cup Q]-t$ is connected, where $P,Q$ denotes the color class from $\mathfrak{C}$ containing $s,t$, respectively.
  Then there is a rooted $H$\!-certificate in $G$.
\end{lemma}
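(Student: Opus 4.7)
The plan is to construct a rooted $H$\!-certificate explicitly, using each matching edge to absorb the partner color class from $A$ into the bag of its mate. For each $t \in A$, set $V_t := \{t\}$. For each $s \in V(H) \setminus A$ with matched partner $t = m(s) \in A$ (via $M$), let $P,Q \in \mathfrak{C}$ be the color classes containing $s,t$ respectively, and set $V_s := (P \cup Q) \setminus \{t\}$. I would then verify, in turn, the four requirements of a rooted $H$\!-certificate: root containment, pairwise disjointness, connectedness of each bag, and an edge in $G$ between $V_{r_1}$ and $V_{r_2}$ for every $r_1r_2 \in E(H)$.

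Root containment is immediate ($s \in P \subseteq V_s$ and $t \in V_t$), and pairwise disjointness follows because $M$ being a matching makes $s \mapsto m(s)$ injective, so distinct $s,s' \in V(H) \setminus A$ use disjoint pairs of color classes. A singleton bag $\{t\}$ with $t \in A$ avoids every $V_s$ either because $t$ was explicitly removed (when $t = m(s)$) or because the color class of $t$ is different from the two color classes assembling $V_s$ (when $t$ is unmatched, using injectivity of $m$). Connectedness of $V_s$ is exactly the hypothesis on $G[P \cup Q] - t$, while $V_t = \{t\}$ is trivially connected.

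For edge-adjacency, fix $r_1 r_2 \in E(H)$. Since $A$ is an anticlique in $H$, at least one endpoint lies outside $A$. If both are outside $A$, then the Kempe chain witnessing $r_1 r_2 \in E(H)$ lives in $G[P_{r_1} \cup P_{r_2}]$ and in particular contains an edge between $P_{r_1} \subseteq V_{r_1}$ and $P_{r_2} \subseteq V_{r_2}$. If $r_2 = t \in A$ and $r_1 = s \in V(H) \setminus A$, then $V_t = \{t\}$ and the Kempe chain joining $s$ and $t$ in $G[P_s \cup P_t]$ forces $t$ to have a neighbor in $P_s$ (since the color class of $t$ is an anticlique); this neighbor lies in $P_s \subseteq V_s$ regardless of whether $t = m(s)$ or not (in both cases the vertex removed from $V_s$ belongs to the other color class $P_{m(s)}$, not to $P_s$), which yields the required adjacency.

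The verification is essentially routine; the one place where the matching assumption is used nontrivially beyond its injectivity is in making the connectedness hypothesis $G[P \cup Q] - t$ available for each bag $V_s$, which is guaranteed precisely by the existence of the matching edge $s\,m(s) \in M$. No step requires an additional argument beyond the definitions, so the main obstacle is simply the case analysis for unmatched $t \in A$ that remain adjacent to some $s$ in $H$.
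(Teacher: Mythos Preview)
Your construction is identical to the paper's: singleton bags $\{t\}$ for $t\in A$ and $(P\cup Q)\setminus\{t\}$ for each $s\in V(H)\setminus A$ matched to $t$. The paper dispatches the entire verification with ``One readily verifies\dots'', so your case analysis simply fills in what the authors left implicit; the only quibble is that in the disjointness check your parenthetical ``when $t$ is unmatched'' should read ``when $t\neq m(s)$'', but the argument is unaffected.
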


\begin{proof}
For $s \in A$ set $V_s:=\{s\}$. For $s \in V(H) \setminus A$ let $t$ be the vertex in $A$ such that $st \in M$,
let $P,Q$ be the classes of $\mathfrak{C}$ containing $s,t$, respectively, and set $V_s:=(P \cup Q) \setminus \{t\}$.
One readily verifies that $(V_s)_{s \in V(H)}$ is a rooted $H$\!-certificate.
\end{proof}

\begin{theorem}
  Let $G$ be any graph with at most $6$ vertices. Consider $Z(G)$ defined as in Section~\ref{S2} with its canonical coloring
  $\mathfrak{C}=\{\{x\} \times \{1,2\}\mid x \in V(G)\}$ and the transversal $T:=V(G) \times \{1\}$.
  Then $Z(G)$ has a rooted $H(Z(G),\mathfrak{C},T)$-certificate.
\end{theorem}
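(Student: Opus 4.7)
Plan: I would argue by case analysis on $|V(G)|$, leaning on Lemma~\ref{L2} whenever possible and producing explicit certificates for the few graphs that Lemma~\ref{L2} does not cover.

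First I would reduce to the case that $G$ is connected with no isolated vertex: the components of $G$ yield disjoint components of $Z(G)$ that can be handled independently, and an isolated vertex $v$ of $G$ corresponds to the trivial bag $\{(v, 1)\}$. For $|V(G)| \leq 4$ the graph $H := H(Z(G), \mathfrak{C}, T) \cong G$ has at most four vertices, so it has property~(*) by Theorem~\ref{T3}, and a rooted $H$-certificate in $Z(G)$ exists immediately.

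For $|V(G)| \in \{5, 6\}$ my main tool would be Lemma~\ref{L2}. Its connectedness hypothesis is automatic for the canonical coloring: whenever $xy \in E(G)$, the graph $Z(G)[\{(x, 1), (x, 2), (y, 2)\}]$ carries the two edges $(x, 1)(y, 2)$ and $(x, 2)(y, 2)$ and is connected. Hence it suffices to find an independent set $A \subseteq V(G)$ together with a matching in $G$ from $V(G) \setminus A$ into $A$ saturating $V(G) \setminus A$. I would take $A$ to be a maximum independent set; by maximality, every vertex of $V(G) \setminus A$ has an $A$-neighbor, so Hall's condition for singletons holds for free, and a Hall failure at $S = V(G) \setminus A$ would force some $a \in A$ to have no neighbor outside $A$ and thus to be isolated (since $A$ is independent), contradicting connectedness. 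The only residual possibility is a Hall failure at some pair $\{u, v\} \subseteq V(G) \setminus A$ sharing a single $A$-neighbor $a$.

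In that exceptional case, connectedness forces $u \sim v$ and pins down the local structure of $G$ around $a, u, v$ (the remaining vertices of $A$ must connect to the rest of $V(G) \setminus A$). I would then construct a rooted $H$-certificate explicitly: singleton bags for part of $A$, a size-$3$ Lemma~\ref{L2}-style bag attached to a vertex of $V(G) \setminus A$ that has two $A$-neighbors, and the pair of ``swap'' bags $V_{(u, 1)} = \{(u, 1), (v, 2)\}$, $V_{(v, 1)} = \{(v, 1), (u, 2)\}$ for the offending pair (it is routine to verify disjointness and all adjacency conditions). For the remaining ``dense'' graphs with $\alpha(G) < \lceil |V(G)|/2 \rceil$, equivalently with triangle-free complement, there are only finitely many connected candidates on $\le 6$ vertices (for instance $K_n$, $K_n - e$, $K_{2,2,2}$, and the $3$-prism), and for each I would exhibit a good permutation in the sense of Section~\ref{S2}: a Hamilton cycle or perfect matching of $G$, for which density makes condition~(ii) easy to verify. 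The main obstacle is the meticulous case-by-case verification in both the ``Hall-failing'' and ``dense'' subcases; crucially, on $\leq 6$ vertices no $3$-regular nonbipartite graph of girth $\geq 5$ exists (the smallest such is the Petersen graph on $10$ vertices), so Claim~\ref{claim2} presents no obstruction to good permutations.
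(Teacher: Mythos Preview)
Your plan parallels the paper's in spirit (Lemma~\ref{L2} for large anticliques, explicit two-vertex bags for the rest), but it has a genuine gap in the ``dense'' branch.

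You assert that every connected $G$ on $\le 6$ vertices with $\alpha(G)\le 2$ admits a good permutation. This is false. Take $V(G)=\{1,2,3,4,5\}$ and $E(G)=\{12,13,14,23,24,45\}$ (that is, $K_4-e$ on $\{1,2,3,4\}$ with a pendant $5$ attached to $4$). One checks $\alpha(G)=2$, so your Hall/Lemma~\ref{L2} branch does not apply, and $G$ has two triangles, so Theorem~\ref{T4} does not apply either. Now any permutation $f$ with $xf(x)\in E(G)$ must have $f(5)=4$. If $f(4)=5$, then $f$ is a $3$-cycle on the triangle $\{1,2,3\}$; for the edge $24$ one needs $f(2)\in\{3\}$ to be adjacent to $4$ or $5$, or $5$ adjacent to $2$ or $f(2)$, all of which fail (and the other orientation of the $3$-cycle already fails at $14$). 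If $f(4)\in\{1,2\}$, the remaining values force $f$ to send some vertex of $\{1,2,3\}$ to $5$, but $5$ is a neighbour of none of them. Hence no good permutation exists, and your plan produces no certificate for this $G$. The paper covers precisely such graphs by a cutvertex reduction that you omit: if $s$ is a cutvertex of $H$ with a singleton component $\{t\}$, set $V_s=\{s,\overline{s},\overline{t}\}$, $V_t=\{t\}$, and recurse on $Z(G)-\{s,\overline{s},t,\overline{t}\}$.

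Two further remarks. First, your appeal to Claim~\ref{claim2} is misdirected: that claim gives only a \emph{necessary} condition for a good permutation (every degree-$\ge 3$ vertex lies on a short cycle), and you conflate it with the hypothesis of Theorem~\ref{T2}; satisfying the necessary condition does not guarantee a good permutation, as the example above shows. Second, even once you restrict to $2$-connected $G$ with $\alpha(G)\le 2$ on six vertices (where a Hamilton cycle exists by Chv\'atal--Erd\H{o}s), the cyclic shift does not automatically yield an $H$-certificate: bags at cyclic distance $3$ need not be adjacent, and the paper spends real effort on the ``long chords'' and ``good matching'' case analysis to repair this. Your ``density makes (ii) easy to verify'' glosses over exactly the work that remains.
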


\begin{proof}
Let $G$ be a counterexample and set $H:=H(Z(G),\mathfrak{C},T)$. 
Note that $G$ is isomorphic to $H$. 
By the positive results of the Section~\ref{S3}, 
we may assume that $|V(H)| \in \{5,6\}$, that $H$ is connected, and that $H$ contains more than one cycle.
If $H$ has  a cutvertex $s$, then there is a component $C$ of $H-s$ with one or two vertices;
if $V(C)=\{t\}$, then we know that $Z(G)-\{s,\overline{s},t,\overline{t}\}$ has a rooted $(H-\{s,t\})$-certificate,
and we extend it to a rooted $H$\!-certificate of $Z(G)$ by setting $V_s:=\{s,\overline{s},\overline{t}\}$ and $V_t:=\{t\}$;
if, otherwise, $V(C)=\{t,u\}$, then $Z(G)-\{t,\overline{t},u,\overline{u}\}$ has a rooted $(H-\{t,u\})$-certificate,
and we extend it to a rooted $H$\!-certificate of $Z(G)$ by setting $V_t:=\{t,\overline{u}\}$ and $V_u:=\{u,\overline{t}\}$.
Therefore, we may assume that $H$ is $2$-connected.

By Lemma~\ref{L2}, we may assume that $H$ has no matchable anticlique.
For $|V(H)|=5$ it follows that there is no anticlique $A$ of order larger than $2$ in $H$, as it would be matchable by $2$-connectivity of $H$.
Consequently, $H$ has a spanning $5$-cycle $t_0,t_1,t_2,t_3,t_4$, and setting $V_{t_i}:=\{t_i,\overline{t_{i+1}}\}$ (indices mod 5) yields a family
of pairwise adjacent cliques and, thus, a rooted $H$\!-certificate.

For $|V(H)|=6$ it follows that there is no anticlique $A$ of order larger than $3$ (as, again, $A$ would be matchable).
If $A$ was an anticlique of order three in $H$, then there would be a matching $M$ from $A$ to $V(H) \setminus A$ by Hall's Theorem
(see~\cite{Diestel2017}), since every vertex in $A$
has two neighbors in $V(H) \setminus A$ and $N_H(A)=V(H) \setminus A$ as there are no anticliques of order $4$.
$M$ is a matching from $V(H) \setminus A$ into $A$, too, so $A$ would be matchable, contradiction. It follows
that $H$ has no anticliques of order larger than $2$.

Let us say that a matching $N=\{r_1s_1,r_2s_2,r_3s_3\}$ of $H$ is {\em good} if every edge from $E(H) \setminus N$
is either on a triangle containing one edge from $N$ or on a cycle of length $4$ containing two edges from $N$.
If there is such a good matching, $V_{r_i}:=\{r_i,\overline{s_i}\}$, $V_{s_i}:=\{s_i,\overline{r_i}\}$ for $i \in \{1,2,3\}$ defines a rooted $H$\!-certificate.

If $H$ has a spanning cycle $t_0t_1t_2t_3t_4t_5$, then setting $V_{t_i}:=\{t_i,\overline{t_{i+1}}\}$ yields a family
of cliques such that $V_{t_i}$ is adjacent to $V_{t_{i+1}}$ and $V_{t_{i+2}}$ ((sub-) indices modulo $6$). So we get a rooted $H$\!-certificate
in the case that all three {\em long chords} $t_0t_3,t_1t_4,t_2t_5$ of the cycle are missing.
If all long chords are present, then they form a good matching in $H$, and we are done, too.

Suppose that $S$ is a smallest separator in $H$ and that $C,D$ are two components of $H-S$. Then $|S| \geq 2$, $C,D$ are complete, and
there are no further components of $H-S$ as $H$ has no anticlique of order three. If $|V(C)|=|V(D)|=|S|=2$, then
there is a spanning cycle $t_0t_1t_2t_3t_4t_5$ in $H$ with $V(C)=\{t_0,t_1\}$, $S=\{t_2,t_5\}$, and $V(D)=\{t_3,t_4\}$;
if $t_2t_5 \not \in E(H)$, then all long chords are missing, and we are done, otherwise $\{t_0t_1,t_2t_5,t_3t_4\}$ is a good matching.

Suppose that $V(C)=\{s\},S=\{t,u\},V(D)=\{a,b,c\}$. We may assume that both $t$ and $u$ have more than one neighbor among $a,b,c$,
for otherwise $N_H(\{s,t\})$ or $N_H(\{s,u\})$ would be a separator of order $2$ as discussed in the previous paragraph.
If $N_H(t) \setminus \{u\} =N_H(u) \setminus \{t\} =\{s,a,b\}$, then $\{st,ua,bc\}$ is a good matching, and we are done.
Otherwise, $ta,tb,ub,uc \in E(H)$ without loss of generality, and if $tc \not\in E(H)$ and $ua \not\in E(H)$, then 
$stabcu$ is a cycle of length $6$ without long chords, and we are done. So $tc \in E(H)$ without loss of generality.
If $ua \in E(H)$, too, then $su,ta,bc$ is a good matching, otherwise $V_s:=\{s,\overline{u}\}$, $V_u:=\{u,\overline{s}\}$,
$V_t:=\{t,\overline{c}\}$, $V_c:=\{c,\overline{b}\}$, $V_b:=\{b,\overline{t}\}$, $V_a:=\{a\}$ defines a rooted $H$\!-certificate.

From now on we may assume that $H$ is $3$-connected. If $H$ has a spanning wheel with center $s$ and rim cycle
$t_0,\dots,t_4$, then $V_s:=\{s\}$, $V_{t_i}:=\{t_i,\overline{t_{i+1}}\}$ (indices mod 5) defines a rooted $K_T$\!-certificate.
If $H$ has a spanning prism with triangles $s_0s_1s_2$, $t_0t_1t_2$ and connecting edges $s_it_i$, then
the connecting edges form a good matching, which in this case define already a rooted $K_T$\!-certificate.

To conclude the proof, we will now show that $H$ either has a spanning wheel or a spanning prism.
If $|S|=3$, then $V(C)=\{s\}$, $S=\{t,u,v\}$, $V(D)=\{a,b\}$,
$ta,ua,ub,vb \in E(H)$ without loss of generality. If $tv \in E(H)$, then we have a spanning prism with triangles $stv$ and $uab$. 
Otherwise, one of $tu,uv$ is in $E(H)$ (as $S$ is not an anticlique), say $tu \in E(H)$. If $uv \in E(H)$, then
we have a spanning wheel with center $u$, otherwise $va \in E(H)$ and we have a spanning prism with triangles $stu$ and $vab$.

Hence we may assume that $H$ is $4$-connected and, therefore, contains $C_6^2$.
Consequently, it has a spanning prism, and we are done.
\end{proof}

\section{Connected transversals of \boldmath$5$-colorings}

By Theorem~\ref{T3}, all graphs with at most four vertices have property (*), 
whereas by Theorem~\ref{T1}, there exists a graph on seven vertices which does not have property (*).
For graphs $K$ on five vertices we do not have the full picture; since we may assume that such a $K$ is connected and since
a connected graph on five vertices and at most five edges contains at most one cycle, we know by Theorem~\ref{T4} that
all graphs on five vertices and at most five edges have property (*), too. This extends as follows:

\begin{theorem}
  \label{T6}
  Every graph on five vertices and at most six edges has property (*).
\end{theorem}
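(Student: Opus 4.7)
The plan is to argue by cases on the isomorphism type of $K$. By the first theorem of the paper, property~(*) inherits to subgraphs of $K$ and can be checked on components, so we may assume $K$ is connected; Theorem~\ref{T4} then settles the case that $K$ has at most one cycle, i.e.\ at most five edges on five vertices. The remaining task is thus to establish (*) for every connected $5$-vertex graph with exactly six edges, and a direct enumeration by degree sequence produces exactly five such graphs: $K_{2,3}$ (degrees $3,3,2,2,2$); the house $C_5$ together with a chord, equivalently a triangle and a $4$-cycle sharing an edge (degrees $3,3,2,2,2$); the bowtie, i.e.\ two triangles glued at a vertex (degrees $4,2,2,2,2$); the graph obtained from the diamond $K_4-e$ by attaching a pendant to one of its two degree-$2$ vertices (degrees $3,3,3,2,1$); and the graph obtained from two triangles sharing an edge by attaching a pendant to one of the two vertices of that shared edge (degrees $4,3,2,2,1$). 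For each such $K$ I would take a minimal counterexample $G$ with coloring $\mathfrak{C}$ and transversal $T$; the standard minimality argument (compare the proofs of Theorems~\ref{T3} and~\ref{T4}) forces $G[A_x\cup A_y]$ to consist of a single $x$-$y$ path $P_{xy}$ plus possibly isolated vertices for each $xy\in E(K)$, and to be edgeless whenever $xy\notin E(K)$.

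The two graphs carrying a pendant vertex are settled by replaying the reduction used in the proof of Theorem~\ref{T4}: if $q$ is the pendant with unique $K$-neighbor $r$, then any vertex of $A_q\setminus\{q\}$ has degree $2$ with both neighbors in $A_r$, and contracting a length-two path through it shrinks $G$ to a smaller counterexample; hence $A_q=\{q\}$. The graph $G-q$ is then a counterexample for $K-q$, which has four vertices; Theorem~\ref{T3} supplies a rooted $(K-q)$-certificate which, extended by the bag $V_q:=\{q\}$, contradicts the choice of $G$. For the three pendant-free graphs I plan to combine Lemma~\ref{L2} with case-specific constructions. For $K_{2,3}$ with parts $\{a,b\},\{c,d,e\}$, the natural anticlique $A=\{c,d,e\}$ admits six matchings into $\{a,b\}$; Lemma~\ref{L2} applies as soon as some matching $\{ax,by\}$ has the property that neither $A_a\cup A_x$ nor $A_b\cup A_y$ contains a vertex isolated outside the path $P_{ax}$ or $P_{by}$, and otherwise the bags $V_a,V_b$ are built directly as the unions of the three $a$-paths (resp.\ $b$-paths) minus the leaves $c,d,e$, partitioning each vertex that lies in both among the two sides. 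The house decomposes as a triangle $v_0v_1v_2$ and a $4$-cycle $v_0v_2v_3v_4$ glued along the edge $v_0v_2$; a certificate is assembled from a $K_3$-certificate for $G[A_{v_0}\cup A_{v_1}\cup A_{v_2}]$ (Theorem~\ref{T3}) together with a $C_4$-certificate for $G[A_{v_0}\cup A_{v_2}\cup A_{v_3}\cup A_{v_4}]$ (Lemma~\ref{L1}), with the bags of $v_0$ and $v_2$ chosen to agree. The bowtie is treated analogously by gluing two $K_3$-certificates from Theorem~\ref{T3} at the shared vertex.

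The main obstacle is expected to be the $K_{2,3}$ case. Unlike the house and the bowtie, $K_{2,3}$ is $2$-connected with no cut-vertex and no pendant, so none of the separator-style reductions used elsewhere applies cleanly. The key technical difficulty is that the paths $P_{ac}$ and $P_{bc}$, which share the endpoint $c$, are free to share internal $A_c$-vertices (and similarly for $d$ and $e$), producing overlaps between the naturally defined $a$-side and $b$-side bags that cannot be resolved by a simple greedy assignment without breaking connectivity. The central technical step will therefore be the reconciliation of these overlaps --- either via an additional local reduction ruling them out in a minimal counterexample, or by an explicit re-routing of the two paths within their Kempe chain so that each shared color class is traversed disjointly by the two sides.
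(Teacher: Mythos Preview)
Your enumeration and the pendant-vertex reduction are correct and match the paper. The gap is in the three pendant-free cases. For the bowtie and the house, gluing black-box certificates from Theorem~\ref{T3} and Lemma~\ref{L1} at shared colour classes does not work: those results hand you \emph{some} certificate with no control over how the shared class $A_{t_1}$ (bowtie) or $A_{v_0},A_{v_2}$ (house) is distributed among the bags, so the two independently obtained certificates will in general assign the same vertex to different bags, and ``with the bags of $v_0$ and $v_2$ chosen to agree'' is not something you can impose after the fact. The paper avoids gluing entirely. It first records a consequence of minimality you did not extract---every vertex of $V(G)\setminus T$ has degree at least~$4$ (otherwise it lies on a single $2$-coloured path and a length-$2$ contraction as in Lemma~\ref{L1} shrinks $G$)---and then builds the bags by hand. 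For the bowtie this degree condition forces $A_2\cup A_3=V(P_{23})$, so the $G$-neighbours $s_2\in A_2$, $s_3\in A_3$ of $t_1$ lie on $P_{23}$; one sets $B_1=\{t_1\}$ and cuts $P_{23}$ between $s_2$ and $s_3$. For $C_5^+$ one first fixes a partition $B_1=\{t_1\},B_2,B_3$ of $A_1\cup A_2\cup A_3$ and then uses the degree-$4$ fact to locate, on the $t_4,t_5$-path in $G[A_4\cup A_5]$, vertices $v,w$ each adjacent to both $B_2$ and $B_3$; a short case analysis finishes.

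For $K_{2,3}$ you correctly flag the difficulty but do not resolve it. Lemma~\ref{L2} with $A=\{c,d,e\}$ needs $G[A_a\cup A_x]-x$ connected, which in a minimal counterexample fails exactly when $A_x$ has vertices off $P_{ax}$; your fallback of taking $V_a,V_b$ as unions of paths runs straight into the overlap you describe, and ``re-routing within the Kempe chain'' is not available since each $G[A_i\cup A_j]$ is already a single path. The paper's argument is again driven by the degree-$4$ condition: it takes neighbours $s_1,s_2,s_3\in A_4$ of $t_1,t_2,t_3$ and branches on whether two of the $s_i$ have neighbours in a common colour class (if so, an explicit certificate is written down). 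In the remaining case it performs a further size reduction, replacing each $t_i$ by a carefully chosen $u_i$ so that the $K_{2,3}$ structure persists in a strictly smaller graph. That reduction is the missing idea in your plan.
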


\begin{proof}
Let $K$ be a graph with $|V(K)|=5$ and $|E(K)| \leq 6$. 
If $K$ is not connected, then $K$ has property (*) by Theorems~\ref{Tsubgraphs} and~\ref{T3}. 
If $K$ is connected and $|E(K)|<6$, then the theorem follows from Theorem~\ref{T4}. 
Therefore, 
we may assume that $K$ is connected and $|E(K)|=6$.
If there is a vertex $q$ of degree $1$ in $K$, then $K-q$ and $K$ have property (*) by Theorem~\ref{T3} and Lemma~\ref{Lpending}, respectively. 
Thus, up to isomorphism, there are only three remaining graphs to consider:
The \emph{hourglass $\hourclass$} obtained from the union of two disjoint triangles  by identifying two nonadjacent vertices, the complete bipartite
graph $K_{2,3}$ with color classes of order $2$ and $3$, respectively, and the graph $C_5^+$ obtained from a $5$-cycle by adding a edge
connecting some pair of nonadjacent vertices.

Assume, to the contrary, that $K$ does not have property (*); then there exists a graph $G$ with a coloring $\mathfrak{C}$ and a transversal $T$ of $\mathfrak{C}$ such that $K$ is isomorphic to a spanning subgraph $H$ of $H(G,\mathfrak{C}, T)$ but $G$ has no rooted $H$\!-certificate. 
Again we may take $G$ with minimal $|V(G)|+|E(G)|$ with respect to this property, implying that for all $A \neq B$ from $\mathfrak{C}$, $G[A \cup B]$ has a single nontrivial component which induces a path between the unique vertices $a\in A\cap T$, $b\in B\cap T$ if $ab\in E(H)$ and $E(G[A \cup B]) = \emptyset$ otherwise.
In particular, $H(G,\mathfrak{C}, T)$ is isomorphic to $K$.
As in the proofs of Lemma~\ref{Lpending} and~\ref{L1}, we may assume that all vertices in $V(G)\setminus T$ have degree at least $4$.

In all three cases, let $T:=\{t_1,\dots,t_5\}$ and $\mathfrak{C}:=\{A_1,\dots,A_5\}$ such that $t_i \in A_i$ for all $i \in \{1,\dots,5\}$;
the $t_i$ will be specified differently in each case. In each case, we will find a rooted $H$\!-certificate $c$ defined by its bags $B_i=:c(t_i)$

\textbf{Case~1.} $K$ is isomorphic to the hourglass $\hourclass$.

Let $t_1 \in T$ be the vertex of degree $4$ in $H$, and let
 $s_2, s_3$ be two neighbors in $G$ of $t_1$ in the color classes $A_2$ and $A_3$, respectively, such that $t_2t_3\in E(H)$.
Then there is a  $t_2$,$t_3$-path $P$ in $G[A_2 \cup A_3]$, and, because of the assumptions on $G$, $s_2,s_3\in A_2\cup A_3=V(P)$.
It is possible to partition $V(P)$ into two bags $B_2$ and $B_3$ such that each of them contains exactly
one vertex from $\{s_2, s_3\}$ and one from  $\{t_2, t_3\}$, and $G[B_2],G[B_3]$ are connected subgraphs. 
Repeating this step for the other two neighbors of $t_1$ in $G$, we obtain bags $B_2,\dots ,B_5$ 
forming a rooted $H$\!-certificate together with the fifth bag $B_1:=\{t_1\}$, contradiction.

\textbf{Case~2.} $K$ is isomorphic to the graph $K_{2,3}$.

First, note that all Kempe chains in $G$ have at least four vertices.
(Otherwise remove one edge connecting two transversal vertices; the remaining graph is unicyclic and we are done by Theorem~\ref{T4}.)
Additionally, assume $|V(G)|$ to be minimal. 

Let $t_1,t_2,t_3$ be the vertices of $T$ of degree $2$ in $H$ and let $s_i$ (not necessarily distinct) be a neighbor of $t_i$ for $i\in\{1,2 ,3\}$
such that $s_1,s_2 ,s_3\in A_4$. 
Each of the vertices $s_i$ has at least two neighbors in a color class other than $A_i$.  
Assume first that two among $s_1,s_2 ,s_3$ have such neighbors in a common color class (this will always happens if $s_1,s_2 ,s_3$ are not pairwise distinct);
say, without loss of generality, there is at least one neighbor of $s_1$ and $s_2$ in $A_3$.
We set $B_1:=\{t_1\},B_2:=\{t_2\},B_3:=\{t_3\},B_4:=V(P_{34})\setminus\{t_3\}, B_5:=(V(P_{15})\setminus\{t_1\})\cup(V(P_{25})\setminus\{t_2\})$,
where $P_{ij}$ is the path from $t_i$ to $t_j$ in $G[A_i\cup A_j]$. 
Because each vertex in $A_5$  is a vertex of $P_{15}$ or $P_{25}$
--- all these vertices have degree at least $3$ --- there are edges between $B_5$ and $B_1,B_2 ,B_3$. 
Since $s_1$ and $s_2$ have a neighbor in $A_3$, we conclude $\{s_1,s_2\}\subseteq V(P_{34})$, and
$B_1,\dots ,B_5$ form a rooted $H$\!-certificate of $G$, contradiction. 

Thus, for $i\in \{1,2,3\}$, the vertices  $s_i$ are distinct and each has  a neighbor in a color class $\tilde{A_i}\neq A_i$ such that $\tilde{A_1},\tilde{A_2},\tilde{A_3}$ are distinct.
Then $s_i$ is a vertex of the 2-colored path from $t_4$ to the transversal vertex of $\tilde{A_i}$. 
Let $u_i$ be the neighbor of $s_i$ in $\tilde{A_i}$ with shortest distance to $t_4$ on this 2-colored path. 
Moreover, $u_1,u_2 ,u_3$ are colored differently as $\tilde{A_1},\tilde{A_2},\tilde{A_3}$ are distinct. 
Since $u_i\notin \{t_1,t_2,t_3\}$, all $s_i, t_i, u_i$ ($i \in \{1,2,3\}$) are distinct. 
Consider the graph $G':=G-\{s_1, t_1,s_2, t_2,s_3, t_3\}$ with the induced coloring $\mathfrak{C}':=\{A \cap V(G')\mid A \in \mathfrak{C}\}$, 
and $T':=\{u_1,u_2,u_3,t_4,t_5\}$. 
All vertices in $V(G)\setminus T$ have degree at least $4$ in $G$, thus, $u_i$ has neighbors in $A_5$
and it is on the $2$-colored path from the transversal vertex of  $\tilde{A_i}$ to $t_5$. 
Because of the choice of $u_1,u_2 ,u_3$, there is a  2-colored path from $u_i$ to $t_4$ for $i\in \{1,2,3\}$ in $G'$.
Thus, $H(G',\mathfrak{C}', T')$ has a spanning subgraph $H'$ isomorphic to $K$. 
Because of the minimality of $G$, there is a rooted $H'$\!-certificate in $G'$. 
Adding to its bag containing $u_i$ the vertices $s_i, t_i$ for $i\in \{1,2,3\}$, we obtain a rooted $H$\!-certificate of $G$, contradiction.

\textbf{Case~3.} $K$ is isomorphic to the graph $C_5^+$.

Let $t_1 \in T$ be the vertex of degree $2$ in $H$ in the unique triangle of $H$,
let $t_2, t_3 \in T$ be the two vertices of degree $3$ in $H$, and let $t_4, t_5$ the remaining two transversal vertices such that $t_2t_4\in E(H)$. 
Choose an arbitrary partition of $A_1\cup A_2\cup A_3$ into $B_1,B_2,B_3$ such that $B_1=\{t_1\}$ and $t_2\in B_2, t_3\in B_3$, $G[B_2],G[B_3]$ are connected subgraphs, and $B_1,B_2 ,B_3$ are bags of a rooted $K_S$-certificate with $S:=\{t_1,t_2,t_3\}$. 
Note that $t_1$ has two neighbors on the  $t_2$,$t_3$-path in $G[A_2\cup A_3]$.
If $t_4$ has a neighbor in $B_2$, then set $B_4:=\{t_4\}$ and $B_5:=(A_4\cup A_5)\setminus \{t_4\}$ as to obtain a rooted $H$\!-certificate,
contradiction. 
By symmetry, $t_5$ has no neighbor in $B_3$. 
But then, consider the $t_4$,$t_2$-path $P$ in $G[A_2\cup A_4]$.
This path starts with $t_4$ followed by a vertex in $B_3$ and ends in $t_2\in B_2$. Thus, there is a vertex $v\in A_4$ having neighbors in both $B_2$ and $B_3$. Since there is a $t_5$,$t_3$-path $Q$ in $G[A_3 \cup A_5]$, disjoint from $P$,
there is another vertex $w\in A_5$ having neighbors in both $B_2$ and $B_3$. 
Due to the assumptions to $G$, $v$ and $w$ have degree 4 and, therefore, they are vertices on the $t_4$,$t_5$-path $S$ in $G[A_4 \cup A_5]$.
Now take a partition of $S$ into adjacent $B_4$ and $B_5$ such that $t_4\in B_4, t_5\in B_5$, $G[B_4],G[B_5]$ are connected subgraphs, and $|\{v,w\}\cap B_4|=|\{v,w\}\cap B_5|=1$. 
Then the bags $B_2,\dots ,B_5$ are pairwise adjacent, hence $G$ has a rooted $H$\!-certificate, a contradiction. 
\end{proof}

\begin{corollary}
Let $G$ be a graph with a Kempe coloring $\mathfrak{C}$ of order 5 and let $T$ be a transversal of $\mathfrak{C}$ such that $G[T]$ is connected. 
Then there exists a rooted $K_T$-certificate in $G$. 
\end{corollary}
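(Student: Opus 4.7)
The plan is to reduce to Theorem~\ref{T6} via a well-chosen auxiliary graph. Specifically, I would let $H$ be the spanning subgraph of $K_T$ obtained by \emph{removing} exactly the edges of $G[T]$, so that $E(K_T)$ is the disjoint union of $E(H)$ and $E(G[T])$. Since $G[T]$ is connected on five vertices, it has at least four edges, and hence $|E(H)| \leq \binom{5}{2} - 4 = 6$. Moreover, because $\mathfrak{C}$ is a Kempe coloring of order $5$, we have $H(G,\mathfrak{C},T) = K_T$, so $H$ is a spanning subgraph of $H(G,\mathfrak{C},T)$ (via the identity map).

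Theorem~\ref{T6} then applies directly to $H$: it has property~(*), so there is a rooted $H$-certificate $c = (V_t)_{t \in T}$ in $G$. The second step is to check that $c$ is in fact a rooted $K_T$-certificate. All conditions except pairwise adjacency of bags come for free, so I only need to verify that for every pair of distinct $s,t \in T$ the bags $V_s$ and $V_t$ are adjacent in $G$. If $st \in E(H)$ this holds by definition of an $H$-certificate; otherwise, by the construction of $H$, the pair $st$ is an edge of $G[T]$, and since $s \in V_s$, $t \in V_t$, the edge $st$ itself witnesses adjacency of $V_s$ and $V_t$.

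I do not anticipate any significant obstacle: the whole argument collapses to the two elementary observations that a connected $5$-vertex graph has at least $4$ edges and that a Kempe coloring forces $H(G,\mathfrak{C},T)=K_T$, together with an invocation of Theorem~\ref{T6}. The only subtlety worth double-checking is that property~(*) for $H$ is applied correctly — but the hypothesis is literally met, since $H$ is a spanning subgraph of $H(G,\mathfrak{C},T)$ of order $5$ with at most $6$ edges.
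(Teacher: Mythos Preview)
Your proposal is correct and follows essentially the same approach as the paper: define $H$ as the complement of $G[T]$ inside $K_T$, use connectedness of $G[T]$ to bound $|E(H)|\leq 6$, invoke Theorem~\ref{T6} to get a rooted $H$-certificate, and then observe that the missing adjacencies are supplied by the edges of $G[T]$ themselves. There is nothing to add.
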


\begin{proof}
Since $G$ has a Kempe coloring, any pair of transversal vertices is connected by a 2-colored path. Hence, $H(G,\mathfrak{C},T)$ is isomorphic to $K_5$.
Let $H$ be obtained from $H(G,\mathfrak{C},T)$ by removing edges if they exist in $G[T]$, i.\,e.\ $V(H)=T$ and $E(H)=\{st\mid s,t\in T,s\neq t, st\notin E(G)\}$. 
Since $G[T]$ is connected, $|E(G[T])|\geq 4$ and $|E(H)|\leq 6$. 
Thus, $H$ fulfills the conditions of Theorem~\ref{T6} and has property (*). We find a rooted $H$\!-certificate $c$ of $G$. 
It remains to show that $c$ is a rooted $H(G,\mathfrak{C},T)$-certificate of $G$. 
If for $s,t\in T$ the edge $st$ is not in $E(G)$, then $st\in E(H)$ and $B_s=c(s),B_t=c(t)$ are adjacent. 
Otherwise, $st\in E(G)$, then $B_s,B_t$ are connected in $G$ by the edge $st$. 
\end{proof}

\section{Concluding Remarks}
\label{S6}

Assuming that $K_5$ has property (*), then any graph $G$ with a $5$-coloring $\mathfrak{C}$ and a transversal $T$ such that the routing graph $H(G,\mathfrak{C},T)$ is a complete graph on $5$ vertices will have a rooted $K_5$-minor. In particular, these graphs won't be able to be planar and they will have an unrooted $K_5$-minor. 
We conclude with two remarks that such graphs are indeed not planar and have a $K_5$-minor even in the case that $K_5$ may not have property (*).
The problem whether $K_5$ has property (*) remains open. 

\begin{remark}\label{R1}
Let $G$ be a graph with a 5-coloring $\mathfrak{C}$ and let $T$ be a transversal of $\mathfrak{C}$. 
If for each distinct $s,t\in T$ there is a 2-colored path from $s$ to $t$ in $G$, then $G$ is not planar. 
\end{remark}

\begin{proof}
On the contrary assume that $G$ is planar, and, again, we may assume that $G$ is chosen with $|V(G)|+|E(G)|$ minimal, implying that for
all $A\neq B$ from $\mathfrak{C}$, $G[A \cup B]$ has a single nontrivial component which induces a path between the unique vertices $a \in A\cap  T, b \in B \cap T$. 
Consider a drawing of $G$ into the plane. Then each of the ten 2-colored paths between the transversal vertices can be considered as a Jordan curve of a plane drawing of $K_5$ on $T$ with crossings. 
Evoke the Tutte-Hanani-Theorem~\cite{TutteHanani} which states that 
in any planar representation of a non-planar graph $G$ there are two nonadjacent edges whose crossing number is odd.
Since $K_5$ is non-planar, there must be two of the Jordan curves with different end vertices crossing and such a crossing is always a vertex of $G$. 
But then, these two Jordan curves share an end vertex in the same color as the crossing vertex, contradiction. 
\end{proof}

\begin{remark}\label{R3}
Let $G$ be a graph with a 5-coloring $\mathfrak{C}$ and let $T$ be a transversal of $\mathfrak{C}$. 
If for all $s\neq t$ from $T$ there is a 2-colored path from $s$ to $t$ in $G$, then $G$ has $K_5$ as minor. 
\end{remark}

\begin{proof}
Assume that $G$ does not contain $K_5$ as minor and let $G$ be chosen as a counterexample minimizing $|V(G)|+|E(G)|$. 
If $G$ was not 3-connected, then there would be a separator $S$ with $|S|\leq 2$ and a component $C$ of $G-S$ containing at least three vertices from $T$. 
Repeating the same arguments as in the proof of Theorem~\ref{T3}, we can reduce $G$ to a smaller counterexample. 
Hence, we assume that $G$ is 3-connected. 

Denote by $G^+$ the graph obtained from $G$ by repeatedly adding edges as long as the resulting graph does not contain a $K_5$-minor. 
Then, $G^+$ is a 3-connected maximal $K_5$-minor-free graph by construction. 
By a famous result of Wagner~\cite{Wagner}, $G^+$ is a 3-clique-sum of maximal planar graphs or the 8-vertex Wagner graph. 
Since $G$ is not planar by Remark~\ref{R1} and not the 8-vertex Wagner graph ($G$ has minimum degree at least $4$), 
there is a clique $S$ in $G^+$ with $|S|=3$ that separates $G^+$. 
Since $G$ is a spanning subgraph of $G^+$, $S$ also separates $G$.

Let  $A_x$ denote the member of $\mathfrak{C}$ containing $x$ with $x\in V(G)$. 
Then there are $s$ ($s\geq 2$) distinct $x_1,x_2,\dots, x_s\in T$ such that $A_{x_i}\cap S=\emptyset$ for $i\in\{1,\dots,s\}$. 
Again as in the proof of Theorem~\ref{T3}, there is one component $C$ of $G-S$ containing all $x_i$, $i\in\{1,\dots,s\}$. 

Let $X:=V(G) \setminus (V(C) \cup S)$ and assume first that there is $y\in T$ such that $|A_y\cap S|\geq 2$.
Let $G'$ be obtained from $G$ by contracting $Y:=X\cup (S\cap A_y)$ to a single vertex $w$. 
For $A \in \mathfrak{C}$ set $A':=(A \setminus Y) \cup \{w\}$ if $A=A_y$ and $A':=A \setminus Y$ otherwise, so that $\mathfrak{C}':=\{A'\mid A \in \mathfrak{C}\}$ is a coloring of $G'$.
For $z \in T$, set $z':=w$ if $z \in Y\cap A_y$,  $z':=z_0$ with $z_0\in S\setminus A_y$ uniquely determined if $z \in Y\setminus A_y$ and $z':=z$ otherwise, so that $T':=\{z'\mid z \in T\}$ is a transversal of $\mathfrak{C}'$,
and $H':=H(G',\mathfrak{C}',T')$ is a complete graph on $T'$.
By the choice of $G$, $G'$ has a $K_5$-minor and so has $G$ because $G'$ is a minor of $G$, contradiction. 

Thus, there are distinct $y_1,y_2,y_3\in T$ such that $|A_{y_i}\cap S|=1$ with $i\in\{1,2,3\}$.
If $X=\{d\}$, then $d$ is not in $T$ and $d$ cannot be a vertex of a 2-colored path of $G$, contradiction. 
Thus, $X$ consists of at least two vertices with degree at least 3 in $G[X\cup S]$. 
If there was no cycle in $G[X\cup S]$, then $G[X\cup S]$ would be  a tree. A leaf of this tree would be a vertex from $S$, contradiction because a tree with at least two vertices of degree at least three has at least four leaves. 

Hence, there is a cycle $D$ in $G[X\cup S]$ and by the 3-connectedness of $G$, there are three vertex disjoint $s_i$,$c_i$-paths $P_i$, $i\in\{1,2,3\}$ in $G$ such that $V(P_i)\cap S=\{s_i\}$ and $V(P_i)\cap V(D)=\{c_i\}$ (possibly $s_i=c_i$). 
It is easy to see that $V(P_i)\subseteq X\cup S$. 
Denote by $D_i$ the subpath of $D$ from $c_i$ to $c_{i+1}$ missing $c_{i+2}$ (indices modulo 3). 
Let $G'$ be obtained from $G[V(C)\cup D\cup \bigcup_{i=1}^3 V(P_i)]$ by contracting $V(P_i)\cup (V(D_i)\setminus\{c_{i+1}\})$ to a single vertex $w_i$ for $i\in\{1,2,3\}$. 
Observe that $w_1w_2w_3$ is a triangle in $G'$. 
For $A \in \mathfrak{C}$ set $A':=(A \cap V(C)) \cup \{w_i\}$ if $s_i\in A$ and $A':=A \cap V(C)$ otherwise, so that $\mathfrak{C}':=\{A'\mid A \in \mathfrak{C}\}$ is a coloring of $G'$.
For $z \in T$, set $z':=w_i$ if $z \notin V(C)$ with $z\in A_{s_i}$ for suitable $i\in\{1,2,3\}$,  and $z':=z$ otherwise, so that $T':=\{z'\mid z \in T\}$ is a transversal of $\mathfrak{C}'$.
It is straightforward to check that $H':=H(G',\mathfrak{C}',T')$ is a complete graph on $T'$.
By the choice of $G$, $G'$ has a $K_5$-minor and so has $G$ because $G'$ is a minor of $G$, contradiction. 
\end{proof}

\printbibliography

\end{document}